\documentclass[10pt,twoside,leqno]{amsart}
\usepackage{amsmath,amssymb,amsfonts,amsthm,enumerate}
\usepackage{srcltx}
\author{A.~Lengyeln\'e T\'oth} 
\address{University of Debrecen, 4010 Debrecen, Pf. 12}
\email{totha@klte-gyakorlo.sulinet.hu}
\author{Z.~Kov\'acs}
\address{College of Ny\'iregyh\'aza,  4400 Ny\'iregyh\'aza, S\'ost\'oi \'ut 31/b}
\email{kovacsz@nyf.hu}
\thanks{The second author was supported by the Japanase-Hungarian bilateral project Nr.\ T\'ET\_10-1-2011-0065}
\title{Left invariant Randers metrics on the 3-dimensional Heisenberg group}
\date{\today}
\dedicatory{Dedicated to Professor Lajos Tam\'assy on the occassion of his ninetieth birthday}
\newtheorem{theorem}{Theorem}[section]
\newtheorem*{theoremm}{Theorem}
\newtheorem{proposition}[theorem]{Proposition}
\newtheorem{corollary}[theorem]{Corollary}
\newtheorem{lemma}[theorem]{Lemma}

\newtheorem{algorithm}[theorem]{Algorithm}
\theoremstyle{definition}
\newtheorem{definition}[theorem]{Definition}
\newcommand{\scalarprod}[2]{\left\langle{#1},{#2}\right\rangle}
\newcommand{\cartanb}[4]{\mathcal{C}_{#4}(#1,#2,#3)}
\newcommand{\cartant}[3]{\mathcal{C}^2_{#3}(#1,#2)}
\newcommand{\vf}{\ensuremath\mathfrak{X}}

\newcommand{\centrumort}{\ensuremath\mathcal{V}}
\newcommand{\centrum}{\ensuremath \mathcal{Z}}
\DeclareMathOperator{\Span}{span}
 
\subjclass[2010]{53B40}
\keywords{Randers metric, Heisenberg group, Chern--Rund connection}
\begin{document}
\begin{abstract}
In the present paper we give a complete description of the Chern--Rund connection defined by a left invariant Randers metric on the 3 dimensional Heisenberg group.
\end{abstract}
\maketitle
\section{Introduction}
Randers metric is a Finsler metric which is defined as
the sum of a Riemannian metric and a 1-form. It is an object that shows strong non-Riemannian characters. The history of Randers metric goes back to G.~Randers'  research on general relativity \cite{MR0003371}.  Since then it has been widely applied in many areas, including electron optics and biology. (A more detailed account can be found in \cite{MR1273129}.) Randers metric can be naturally deduced as the solution of the famous Zermelo navigation problem \cite{MR2106471}. 

In chapter 11 of \cite{MR1747675} the authors give six reasons to study Randers metric. Number 5 is that Randers metrics are computable and this may lead to a better understanding of Finsler metrics. Our strategy in this paper was the same, we specialized our original problem of left invariant Finsler metrics on two-step nilpotent groups (presented in \cite{MR2430243}) to Randers metric on Heisenberg group. The straight motivation of the original study was P.~Eberlein's comprehensive work \cite{MR1296558} for the Riemannian case. 

In the previous paper \cite{MR2430243} we computed some geometric quantities such as curvature and flag curvature for a general left invariant Finsler metric on a two-step nilpotent group. In the first step we gave an explicit formula for the Chern--Rund connection. That paper had limitations, the reference vector for the Chern--Rund connection was chosen from the center of the respective Lie algebra. In the present paper we give a complete description of the Chern--Rund connection defined by a left invariant Randers metric on the 3-dimensional Heisenberg group. The Randers perturbation vector lies in the center of the Lie algebra in this paper.

\section{Conventions}
\subsection{Finsler metrics and the Chern-Rund connection}\label{ss:zerg5}
Through this paper we use \cite{MR1747675} as a basic reference for foundations of Finsler geometry. We consider metric structures on a differentiable manifold $N$ and 'differentiable' means $C^\infty$-differentiable. The module of tangent vector fields over $N$ is denoted by $\vf(N)$.
\begin{definition}
A \emph{Finsler manifold} $(N,F)$ is a differentiable manifold $N$ equipped with a Finsler metric $F$. A \emph{Finsler metric} on $N$ is a continuous map, $F\colon TN\to \mathbf{R}$ differentiable outside the zero section and satisfying three conditions:
\begin{enumerate}
\item $F$ is positively homogeneous,
\item if $F(X)=0$ then $X=0$,
\item $F$ is strong convex.
\end{enumerate}
\end{definition}

In the sequel we fix a nowhere vanishing vector field $W\in\vf(N)$, the so called reference vector field. Generally such a vector field does not exist globally and we arrange that all objects live on an open subset $\mathcal{U}\subset N$, where the reference vector field exists.
\begin{definition}
The \emph{osculating Riemann metric} $\scalarprod{}{}_W$ is determined by the Finslerian fundamental function $F$ and by the reference vector field $W\in\vf(N)$ in the following way:
\begin{equation}
\scalarprod{X_p}{Y_p}_W=\frac{1}{2}
\left.
\frac{\partial^2F^2(W_p+sX_p+tY_p)}{\partial s\partial t}
\right|_{s,t=0},
\quad p\in N,\ X,Y\in\vf(N).
\end{equation}
\end{definition}
\begin{definition}
For $X,Y,Z\in\vf(N)$,
\[
\cartanb{X_p}{Y_p}{Z_p}W
=\left.\frac{1}{4}\frac{\partial^3}{\partial r\partial s\partial t}
\right|_{r,s,t=0}F^2
\left(
W_p+rX_p+sY_p+tZ_p
\right)
\]
is the  \emph{(osculating) Cartan tensor}. Its $(1,2)$-type version is defined by
\[
\mathcal C^2_W\colon\vf(N)\times \vf(N)\to\vf(N),\
\scalarprod{\cartant{X}{Y}{W}}{Z}_W=\cartanb{X}{Y}{Z}{W}.
\]
\end{definition}
For the Cartan tensor we have
\begin{equation}
\cartanb{W}{X}{Y}W=\cartanb{X}{W}{Y}W=\cartanb{X}{Y}{W}W=0.
\label{eq:chern00}
\end{equation}
\begin{theorem}[\cite{MR2132661}]
The Chern--Rund connection 
$\nabla^W\colon \vf(N)\times\vf(N)\to\vf(N)$ w.r.t.~the reference vector field $W$ 
satisfies
\begin{equation}
\begin{split}
2\scalarprod{\nabla^W_XY}{Z}_W&=
X\scalarprod{Y}{Z}_W+
Y\scalarprod{Z}{X}_W-
Z\scalarprod{X}{Y}_W+\\
&\quad +\scalarprod{[X,Y]}{Z}_W-
\scalarprod{[Y,Z]}{X}_W+
\scalarprod{[Z,X]}{Y}_W-\\
&\quad-2\cartanb{\nabla^W_XW}{Y}{Z}W-
2\cartanb{\nabla^W_YW}{Z}{X}W+\\
&\quad+2\cartanb{\nabla^W_ZW}{X}{Y}W.
\end{split}
\label{eq:CR}
\end{equation}
\end{theorem}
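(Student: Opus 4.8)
\section*{Proof proposal}

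The plan is to obtain \eqref{eq:CR} from the two defining properties of the Chern--Rund connection by the classical Koszul-type manipulation, carrying along the extra Cartan terms that record its non-metricity. Recall from \cite{MR1747675,MR2132661} that, once the reference vector field $W$ is fixed on $\mathcal U$, the connection $\nabla^W$ is the unique linear connection on the tangent module of $\mathcal U$ which is torsion free,
\[
\nabla^W_XY-\nabla^W_YX=[X,Y],
\]
and which is \emph{almost compatible} with the osculating metric in the sense that, for all $X,Y,Z\in\vf(N)$,
\[
Z\scalarprod{X}{Y}_W=\scalarprod{\nabla^W_ZX}{Y}_W+\scalarprod{X}{\nabla^W_ZY}_W+2\cartanb{\nabla^W_ZW}{X}{Y}W .
\]
Here the last term is the contribution of the ``$\delta y$''-direction of the Chern connection's metric defect, pulled back along the reference section $W$; by \eqref{eq:chern00} it vanishes as soon as one of its arguments equals $W$, so $\nabla^W$ is genuinely metric in the $W$-direction.

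First I would write the almost-compatibility identity three times: once with $X$ as the differentiating field and $Y,Z$ as the metric arguments, once with $Y$ and $Z,X$, and once with $Z$ and $X,Y$. Adding the first two equations and subtracting the third reproduces on the left exactly $X\scalarprod{Y}{Z}_W+Y\scalarprod{Z}{X}_W-Z\scalarprod{X}{Y}_W$, and on the right a sum of six terms of the form $\scalarprod{\nabla^W_\bullet\bullet}{\bullet}_W$ together with the three Cartan contributions $2\cartanb{\nabla^W_XW}{Y}{Z}W$, $2\cartanb{\nabla^W_YW}{Z}{X}W$ and $-2\cartanb{\nabla^W_ZW}{X}{Y}W$.

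Next I would regroup the six connection terms, using the symmetry of $\scalarprod{}{}_W$, into $\scalarprod{\nabla^W_XY+\nabla^W_YX}{Z}_W+\scalarprod{\nabla^W_XZ-\nabla^W_ZX}{Y}_W+\scalarprod{\nabla^W_YZ-\nabla^W_ZY}{X}_W$. Torsion freeness turns the first slot into $2\nabla^W_XY-[X,Y]$ and the other two into $[X,Z]$ and $[Y,Z]$; transposing everything except $2\scalarprod{\nabla^W_XY}{Z}_W$ to the other side — which flips the three Cartan contributions into the $-,-,+$ pattern of \eqref{eq:CR} — and using $[X,Z]=-[Z,X]$ yields \eqref{eq:CR} verbatim. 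As a consistency check, in the Riemannian case $\mathcal C_W\equiv 0$ and the formula collapses to the Koszul formula for the Levi--Civita connection of $\scalarprod{}{}_W$.

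I expect the only delicate point to be the first paragraph: fixing the precise normalization (the factor $2$ and the signs) of the almost-compatibility identity in the reference-vector formulation, and confirming that the metric defect of $\nabla^W$ along $Z$ is exactly $2\cartanb{\nabla^W_ZW}{\cdot}{\cdot}W$ rather than some other expression assembled from $W$ and the nonlinear connection. Once this is settled, the remaining steps are the verbatim Koszul computation and require no further choices; note in particular that we only verify \eqref{eq:CR}, we do not solve it, since $\nabla^W_XW$, $\nabla^W_YW$ and $\nabla^W_ZW$ still occur on the right-hand side.
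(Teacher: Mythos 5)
Your derivation is correct: the paper itself gives no proof of this theorem (it is quoted from the cited reference), and your Koszul-type manipulation — writing the almost-metricity identity for the three cyclic choices of differentiating field, forming the combination $(1)+(2)-(3)$, and eliminating the symmetric part via torsion-freeness — reproduces \eqref{eq:CR} with all signs and the $-,-,+$ pattern of the Cartan terms exactly as stated. The one point worth making explicit is a matter of logical direction rather than of computation: the paper states \eqref{eq:CR} first and then lists torsion-freeness \eqref{eq:chern01} and almost-metricity as its consequences, whereas you take those two properties as the characterization of $\nabla^W$ and derive \eqref{eq:CR} from them; this is legitimate (and is how the connection is set up in the cited source), and your closing remark that \eqref{eq:CR} is only \emph{verified}, not \emph{solved} — since $\nabla^W_XW$, $\nabla^W_YW$, $\nabla^W_ZW$ persist on the right-hand side — correctly identifies why the paper then needs Algorithm~\ref{algorithm} to actually extract the local components.
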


The Chern--Rund connection is torsion-free, that is,
\begin{equation}\label{eq:chern01}
\nabla^W_XY-\nabla^W_YX-[X,Y]=0,
\end{equation}
and almost metric, that is,
\begin{equation*}
X\scalarprod{Y}{Z}_W=\scalarprod{\nabla^W_XY}{Z}_W+\scalarprod{Y}{\nabla^W_XZ}_W+
2\cartanb{\nabla^W_XW}{Y}{Z}W.
\end{equation*}

In order to get all the local components of the Chern--Rund connection w.r.t.\ a local base, it is sufficient to show that it can be eliminated from the right hand side of \eqref{eq:CR}. We can do it with the following simple algorithm.
\begin{algorithm}[`Local strategy']\label{algorithm}
Let $(E_i)$ be an orthonormal base w.r.t.\ $\scalarprod{}{}_W$.
\begin{enumerate}
\item[1.] Choose $X,Y\in\{W,E_i\}$ such that all the terms in the right hand side of \eqref{eq:CR} are explicitly known while computing for $\scalarprod{\nabla^W_XY}{E_i}_W$.
\item[2.] Set
\begin{equation*}
\nabla^W_XY=\sum_{i}\scalarprod{\nabla^W_XY}{E_i}_WE_i.
\end{equation*}
\item[3.] Repeat the previous steps until all the local components of the Chern--Rund connection are known. 
\end{enumerate}
\end{algorithm}
We give further details. 
For the first six terms of the right hand side of \eqref{eq:CR} we use the abbreviation $\mathcal A_W(X,Y,Z)$. In these terms the Chern-Rund connection does not occur.
\begin{enumerate}
\item[1a.]
Considering \eqref{eq:chern00}, equation \eqref{eq:CR} implies that
\[
2\scalarprod{\nabla^W_WW}{E_i}_W=\mathcal A_W(W,W,E_i),
\]
i.e.\ $\nabla^W_WW$ is explicitly known:
\[
2\nabla^W_WW=\sum_i\mathcal A_W(W,W,E_i)E_i.
\]
\item[1b.] Let $S\in \{E_i\}$. From  equation \eqref{eq:CR} we have
\begin{equation}\label{1b}
2\scalarprod{\nabla^W_SW}{E_i}_W=
\mathcal A_W(S,W,E_i)-
2\cartanb{\nabla^W_WW}{E_i}{S}W.
\end{equation}
Here $\nabla^W_WW$ is known from the previous step, and we get $\nabla^W_SW$.
\item[1c.] Let $S,T\in\{E_i\}$.
\begin{equation}\label{1c}
\begin{split}
2\scalarprod{\nabla^W_ST}{E_i}_W&=
\mathcal A_W(S,T,E_i)\\
&\quad-2\cartanb{\nabla^W_SW}{T}{E_i}W-
2\cartanb{\nabla^W_TW}{E_i}{S}W+\\
&\quad+2\cartanb{\nabla^W_{E_i}W}{S}{T}W.
\end{split}
\end{equation}
Here all the terms in the right hand side are known from 1b.
\end{enumerate}
\subsection{Left invariant Randers metrics on 3-dimensional Heisenberg group}

\begin{definition}\label{def:3wjswg}
Let $\mathcal Z=\operatorname{span}{Z}$ be a 1-dimensional vector space spanned by the element $Z$. Let $(X,Y)$ be any basis of $\mathbf{R}^2$.  Define $[X,Y]=-[Y,X]=Z$ with all other brackets zero. The Lie algebra $\mathcal N=\mathcal Z\oplus\mathbf{R}^2$ is the 3-dimensional \emph{Heisenberg algebra}. 
Moreover, let $\scalarprod{}{}$ denote the positive definite inner product on $\mathcal N$ for which $(X,Y,Z)$ is an orthonormal base. Thus $\operatorname{span}(X,Y)$ is the orthogonal complement of $\mathcal Z$ for which we use the notation $\mathcal Z^\perp$. 
\end{definition}

Let $\{N,\scalarprod{}{}\}$ denote the three-dimensional Heisenberg group, i.e.\ $N$ is a simply connected 2-step nilpotent group with Lie algebra $\mathcal N$ and $\scalarprod{}{}$ is the left invariant Riemannian metric  induced by left translations from the original metric given on $\mathcal N$.
In this paper we shall regard the elements of $\mathcal N$ as left invariant vector fields on $N$ determined by their values at the identity of $N$. 
We remark that the first three terms of the right hand side of \eqref{eq:CR} vanish for left invariant vector fields.

Left invariant Cartan tensor and Chern--Rund connection can be derived from a left invariant Finsler metric. To be more precise let $W\in\mathcal N$ and we may regard $\nabla^W$ as a bilinear mapping from $\mathcal N\times\mathcal N\to\mathcal N$. Similarly  the trilinear function $\mathcal C_W$ lives on $\mathcal N$, too:
\[
\mathcal C_W\colon\mathcal N\times\mathcal N\times\mathcal N\to\mathbf R
\]

It is well-known that for $X_0\in\mathcal N $ with property $\|X_0\|<1$ the function
\begin{equation}\label{eq:83gcf}
f\colon\mathcal N\to\mathbf R,\ X\mapsto f(X)=\sqrt{\scalarprod{X}{X}}+\scalarprod{X_0}{X}
\end{equation}
defines a Minkowski functional on $\mathcal N$, therefore it can be extended to a \emph{left-invariant Randers type Finsler metric} $F$ on the Lie group $N$ by left translations. Excluding the case $X_0=0$, the remaining Randers metrics are non-Riemannian \cite[p.~283]{MR1747675}. By a direct computation we can express $\scalarprod{}{}_W$ and $\cartanb{}{}{}W$ in terms of the Riemannian metric.
\begin{proposition}[\cite{MR2214213}, \cite{MR2598183}]\label{prop:7ssu}
Let $W\in\mathcal N$ and $\scalarprod{W}{W}=1$. Then
\begin{multline}\label{eq:scalar}
\scalarprod{U}{V}_W=\scalarprod{U}{V}+\scalarprod{X_0}{U}\scalarprod{X_0}{V}
-\scalarprod{X_0}{W}\scalarprod{W}{U}\scalarprod{W}{V}
\\
+
\scalarprod{X_0}{U}\scalarprod{W}{V}+
\scalarprod{X_0}{W}\scalarprod{U}{V}+
\scalarprod{X_0}{V}\scalarprod{W}{U}
\end{multline}
and
\begin{multline}\label{eq:cartan}
\cartanb{U}{V}{X}{W}=\frac{1}{2}\sum_{[U,V,X]}\left\{
\scalarprod{X_0}{W}\scalarprod{W}{U}\scalarprod{W}{V}\scalarprod{W}{X}\right.\\
-\left.\scalarprod{X_0}{W}\scalarprod{X}{V}\scalarprod{U}{W}-
\scalarprod{X_0}{X}\scalarprod{W}{V}\scalarprod{W}{U}+
\scalarprod{X_0}{U}\scalarprod{X}{V}
\right\}.
\end{multline}
where $\sum_{[U,V,X]}$ refers to the cyclic sum with respect to $U,V,X$.
\end{proposition}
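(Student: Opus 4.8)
The plan is to compute the relevant second and third derivatives of $F^2$ straight from the defining formulas, exploiting the splitting $F=\alpha+\beta$ with $\alpha(X)=\sqrt{\scalarprod{X}{X}}$ and $\beta(X)=\scalarprod{X_0}{X}$, so that
\[
F^2=\alpha^2+2\alpha\beta+\beta^2 .
\]
Here $\alpha^2(X)=\scalarprod{X}{X}$ and $\beta^2(X)=\scalarprod{X_0}{X}^2$ are quadratic forms, and $\beta$ is linear; the only nonlinear ingredient is the norm $\alpha$, so the whole computation reduces to knowing the low order directional derivatives of $\alpha$ at the unit vector $W$.

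First I would record those derivatives. Since $\alpha$ is positively homogeneous of degree one and $\scalarprod{W}{W}=1$, differentiating (using $\partial_i\partial_j\alpha=\alpha^{-1}\delta_{ij}-\alpha^{-3}X_iX_j$ and one further differentiation) and evaluating at $X=W$ gives
\begin{align*}
\left.\frac{\partial}{\partial s}\right|_{0}\alpha(W+sU)&=\scalarprod{W}{U},\\
\left.\frac{\partial^2}{\partial s\,\partial t}\right|_{0}\alpha(W+sU+tV)&=\scalarprod{U}{V}-\scalarprod{W}{U}\scalarprod{W}{V},\\
\left.\frac{\partial^3}{\partial r\,\partial s\,\partial t}\right|_{0}\alpha(W+rU+sV+tX)&=3\scalarprod{W}{U}\scalarprod{W}{V}\scalarprod{W}{X}-\sum_{[U,V,X]}\scalarprod{U}{V}\scalarprod{W}{X}.
\end{align*}

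Then \eqref{eq:scalar} follows by plugging into the definition of $\scalarprod{}{}_W$: the contribution of $\alpha^2$ is $\scalarprod{U}{V}$, that of $\beta^2$ is $\scalarprod{X_0}{U}\scalarprod{X_0}{V}$, and the cross term $2\alpha\beta$ contributes $\left.\partial_s\partial_t\right|_0(\alpha\beta)$, which by the Leibniz rule — keeping only the summands that survive because $\beta$ is linear — equals $\scalarprod{X_0}{W}\bigl(\scalarprod{U}{V}-\scalarprod{W}{U}\scalarprod{W}{V}\bigr)+\scalarprod{W}{U}\scalarprod{X_0}{V}+\scalarprod{W}{V}\scalarprod{X_0}{U}$; adding the three contributions gives \eqref{eq:scalar}. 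For \eqref{eq:cartan} one uses that $\alpha^2$ and $\beta^2$ are quadratic, so their third derivatives vanish and $\cartanb{U}{V}{X}{W}=\tfrac14\left.\partial_r\partial_s\partial_t\right|_0(2\alpha\beta)=\tfrac12\left.\partial_r\partial_s\partial_t\right|_0(\alpha\beta)$. Expanding by the Leibniz rule and dropping every term containing a second or third derivative of $\beta$ (all zero) leaves $\scalarprod{X_0}{W}\,\partial_r\partial_s\partial_t\alpha$ together with the three cyclic terms of type $(\partial^2\alpha)(\partial\beta)$; substituting the displayed formulas and collecting into a cyclic sum over $U,V,X$ — noting that $3\scalarprod{W}{U}\scalarprod{W}{V}\scalarprod{W}{X}=\sum_{[U,V,X]}\scalarprod{W}{U}\scalarprod{W}{V}\scalarprod{W}{X}$ — yields \eqref{eq:cartan}.

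The routine but error-prone part is the bookkeeping in this last step: about a dozen scalar monomials must be rearranged into the symmetric cyclic-sum shape of \eqref{eq:cartan}, using repeatedly the symmetry of $\scalarprod{}{}$ to identify terms that look different but agree after cyclic summation. A convenient consistency check is provided by \eqref{eq:chern00}: setting $X=W$ and using $\scalarprod{W}{W}=1$, both formulas must collapse to $0$, which also fixes the sign conventions in the third derivative of $\alpha$.
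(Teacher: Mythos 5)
Your computation is correct, and it is exactly the ``direct computation'' the paper alludes to: the paper itself gives no proof of Proposition~\ref{prop:7ssu}, only the citations \cite{MR2214213}, \cite{MR2598183}, so your derivation via $F^2=\alpha^2+2\alpha\beta+\beta^2$, the vanishing of the third derivatives of the quadratic terms, and the directional derivatives of $\alpha$ at the unit vector $W$ supplies the intended argument. The derivative formulas you record and the Leibniz bookkeeping reassemble precisely into the cyclic sums of \eqref{eq:scalar} and \eqref{eq:cartan}, so no gap remains.
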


\section{Determination of the Chern--Rund connection}

In the present paper we assume that
the Randers-type Minkowski functional \eqref{eq:83gcf} on the three-dimensional Heisenberg algebra ($=\operatorname{span}(X,Y,Z)$, as in definition \ref{def:3wjswg}) is determined by $X_0=\xi Z\in\mathcal Z$, ($0<\xi<1$) i.e.\ it is distinguished algebraically by the one-dimensional center of the Lie algebra. The reference vector $W$ is supposed to be normalized w.r.t.~$\scalarprod{}{}$ in the sequel.

We use the so called Berwald-Mo\'or frame (\cite{MR0345033, MR0090088}) for computation. 
\subsection{The Berwald-Mo\'or frame}\sloppy
A.~Mo\'or used in the paper \cite{MR0090088} a special orthonormal frame which was a generalization of the  Berwald frame of two-dimensional Finsler spaces.
We adapt the original definition to our context.
The first base vector is the normalized reference vector $W$: 
\[
E_1=\frac{1}{\sqrt{\scalarprod{W}{W}_W}}W.
\]
The second base vector is the normalized Cartan vector.
\begin{definition}[c.f.~\cite{MR1936741}]
The \emph{Cartan vector w.r.t.\ $W$} is the unique vector $C_W\in\mathcal N$ such that
\begin{equation}
\forall S\in\mathcal N: \scalarprod{S}{C_W}_W=
(\operatorname{trace}\mathcal C^2_W)(S)
=
\operatorname{trace}(U\mapsto \mathcal C^2_W(S,U)).
\end{equation}
\end{definition}

It follows directly from the definition  that 
\[
\scalarprod{W}{C_W}_W
=\operatorname{trace}(U\mapsto \mathcal C^2_W(W,U))
=\operatorname{trace}(U\mapsto 0)=0,
\]
i.e.\ The Cartan vector w.r.t\ $W$ is always orthogonal to $W$. Deicke's classical theorem states that $\forall W: C_W=0$ if and only if the metric is Euclidean \cite{MR0055026}. However, $C_W=0$ is possible for some $W$ in the non-Euclidean case. 
\begin{proposition} If $W\notin\mathcal{Z}$ then $C_W\not = 0$.
\end{proposition}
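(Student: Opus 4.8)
The plan is to compute the Cartan vector $C_W$ explicitly when $W\notin\mathcal Z$ and show it is nonzero by exhibiting a vector $S$ with $\scalarprod{S}{C_W}_W\neq 0$. First I would fix an orthonormal base $(E_i)$ of $\mathcal N$ with respect to $\scalarprod{}{}_W$ adapted to $W$; since $\scalarprod{W}{C_W}_W=0$ already, the issue is only whether the remaining trace of the Cartan tensor vanishes. By definition it suffices to find a single $S$ with $(\operatorname{trace}C^2_W)(S)=\operatorname{trace}(U\mapsto C^2_W(S,U))\neq 0$, equivalently $\sum_i \cartanb{S}{E_i}{E_i}{W}\neq 0$. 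So the whole statement reduces to evaluating this sum using the closed formula \eqref{eq:cartan} from Proposition \ref{prop:7ssu}.

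The key step is to substitute $X_0=\xi Z$ (with $0<\xi<1$) into \eqref{eq:cartan} and carry out the cyclic sum and then the trace over an orthonormal base. Because $X_0$ lies in the center $\mathcal Z$, the inner products $\scalarprod{X_0}{U}=\xi\scalarprod{Z}{U}$ collapse many terms. I would take $S$ to be a suitably chosen element of $\mathcal Z^\perp$ lying in the plane spanned by $W$ and its $\scalarprod{}{}$-component outside $\mathcal Z$; the hypothesis $W\notin\mathcal Z$ guarantees $\scalarprod{W}{Z}\neq \pm 1$, so the component of $W$ orthogonal to $Z$ is nonzero and such an $S$ exists. After plugging into \eqref{eq:cartan} and summing, I expect the trace $\sum_i\cartanb{S}{E_i}{E_i}{W}$ to reduce to a nonzero rational expression in $\xi$ and in $\scalarprod{W}{Z}$, with the factor $\bigl(1-\scalarprod{W}{Z}^2\bigr)$ (or its square root) appearing in the numerator — hence nonzero precisely because $W\notin\mathcal Z$ and $0<\xi<1$.

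The main obstacle is purely organizational: keeping track of the cyclic sum in \eqref{eq:cartan} together with the summation defining the trace, while simplifying the many inner products. It helps to diagonalize first. One clean way is to note that $\scalarprod{}{}_W$ and $\scalarprod{}{}$ agree on $\mathcal Z^\perp$ up to the correction terms in \eqref{eq:scalar}, and that $X_0\in\mathcal Z$ makes most cross terms vanish; one can therefore compute the trace with respect to an orthonormal base containing $E_1=W/\sqrt{\scalarprod{W}{W}_W}$ and two vectors spanning $W^{\perp_W}$. Since $C^2_W(W,U)=0$ by \eqref{eq:chern00}, only the two non-$W$ base vectors contribute, reducing the trace to a two-term sum.

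An alternative, slicker route avoids choosing $S$ at all: suppose for contradiction that $C_W=0$. Then by the definition of the Cartan vector, $\operatorname{trace}C^2_W=0$ as a functional, i.e.\ $\sum_i\cartanb{S}{E_i}{E_i}{W}=0$ for every $S$. Specializing \eqref{eq:cartan} and summing, this forces an algebraic identity among $\xi$ and $\scalarprod{W}{Z}$ that can hold only when $\scalarprod{W}{Z}^2=1$, i.e.\ $W\in\mathcal Z$ (recall $W$ is $\scalarprod{}{}$-normalized), contradicting the hypothesis. Either way the proof is a short computation once \eqref{eq:cartan} is in hand; the essential input is that the center-aligned choice $X_0=\xi Z$ does not happen to kill the trace unless $W$ itself is central.
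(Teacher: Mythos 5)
Your overall strategy is the paper's: reduce the statement to the linear functional $S\mapsto(\operatorname{trace}C^2_W)(S)$, evaluate it by summing the components of \eqref{eq:cartan} over a base (only the two base vectors transversal to $W$ contribute, by \eqref{eq:chern00}), and observe that the resulting expression can only vanish for all $S$ when $W=\pm Z$. Indeed the computation collapses to
\[
(\operatorname{trace}C_W^2)(S)=\tfrac{3}{2}(g^{22}+g^{33})\left(\scalarprod{X_0}{S}-\scalarprod{X_0}{W}\scalarprod{S}{W}\right),
\]
with $g^{22}+g^{33}>0$. However, your primary route chooses the test vector badly: you propose $S\in\mathcal Z^\perp$ (a multiple of the component of $W$ orthogonal to $Z$). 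For such $S$ one has $\scalarprod{X_0}{S}=0$, so the displayed functional equals $-\tfrac32(g^{22}+g^{33})\xi\scalarprod{Z}{W}\scalarprod{S}{W}$, which vanishes whenever $W\in\mathcal Z^\perp$ --- a case fully allowed by the hypothesis $W\notin\mathcal Z$. So that choice of $S$ detects nothing exactly when $W\perp Z$, and the factor $1-\scalarprod{W}{Z}^2$ you expect in the numerator does not appear for this $S$; it appears for $S=Z$, which is the paper's choice and yields $\tfrac32(g^{22}+g^{33})\xi(1-\scalarprod{Z}{W}^2)>0$ for every unit $W\neq\pm Z$.

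Your alternative route by contradiction is sound and closes this gap: if the functional above vanishes for all $S$, then (since $g^{22}+g^{33}>0$ and $\xi\neq0$) $Z=\scalarprod{Z}{W}W$, forcing $W=\pm Z$. That argument is essentially equivalent to the paper's, just phrased contrapositively. So the proposal is repairable as written, but the first (and main) instantiation fails on the nonempty set $W\in\mathcal Z^\perp$; replace your $S$ by $Z$, or keep only the contradiction version.
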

\begin{proof}
Let $(X_1=W,X_2,X_3)$ be an orthonormal base w.r.t.~$\scalarprod{}{}$, $g_{ij}=\scalarprod{X_i}{X_j}_W$, $(g^{ij})=(g_{ij})^{-1}$ and $S=\sum_{l=1}^3\scalarprod{S}{X_l}X_l=g^{kl}\scalarprod{S}{X_k}_WX_l$ an arbitrary vector. From the definition of the trace operator it follows that
\begin{align}\label{eq:ei64g}
\operatorname{trace}\{U\mapsto \mathcal C^2_W(S,U)\}&=
\sum_{i=1}^3\scalarprod{X_i}{\mathcal C^2_W(S,X_i)}=
g^{ij}\scalarprod{\mathcal C^2_W(S,X_i)}{X_j}_W\\
&=g^{ij}\cartanb{S}{X_i}{X_j}W.\nonumber
\end{align}
Equation \eqref{eq:cartan} gives
\begin{gather}\label{eq:iue6fg}
\cartanb{X_2}{X_2}{X_2}W=\frac{3}{2}\scalarprod{X_0}{X_2},\
\cartanb{X_2}{X_2}{X_3}W=\frac{1}{2}\scalarprod{X_0}{X_3},\\
\cartanb{X_3}{X_3}{X_2}W=\frac{1}{2}\scalarprod{X_0}{X_2},\
\cartanb{X_3}{X_3}{X_3}W=\frac{3}{2}\scalarprod{X_0}{X_3},\nonumber
\end{gather}
and all the terms $\cartanb{X_1}{X_i}{X_j}W$ vanish.
Substituting \eqref{eq:iue6fg} into \eqref{eq:ei64g} we have
\begin{equation}\label{eq:85hf8}
(\operatorname{trace}C_W^2)(S)=(g^{22}+g^{33})
\left(
\scalarprod{X_0}{S}-\scalarprod{X_0}{W}\scalarprod{S}{W}
\right).
\end{equation}
Substitute $Z$ for $S$:
\begin{align}\label{eq:ufdbdi}
\scalarprod{C_W}{Z}_W&=(\operatorname{trace}\mathcal C_W^2)(Z)\\
&=(g^{22}+g^{33})\left(
\scalarprod{X_0}{Z}-\scalarprod{X_0}{W}\scalarprod{Z}{W}\right)\nonumber\\
&=(g^{22}+g^{33})\xi(1-\scalarprod{Z}{W}^2).\nonumber
\end{align}
$g^{22}, g^{33}>0$ because $(g^{ij})$ is positive definite. $\xi\neq 0$ because the space is non-Riemannian. From the Cauchy-Schwarz inequality we have $1-\scalarprod{Z}{W}^2\geq0$, and equality holds if and only if $W=\pm Z$.
\end{proof}

In the paper \cite{MR2430243} the case of $W\in\mathcal{Z}$ was completely described. The following proposition shows that this case has a Riemannian flavour for the Randers metric.
\begin{proposition}
\begin{equation}\label{eq:8wgdfv}
\begin{tabular}{lll}
$\scalarprod{X}{X}_Z=\xi+1$,&
$\scalarprod{X}{Y}_Z=0$,&
$\scalarprod{Y}{Y}_Z=\xi+1$,
\\[1em]
$\scalarprod{X}{Z}_Z=0$,&
$\scalarprod{Y}{Z}_Z=0$,&
$\scalarprod{Z}{Z}_Z=(1+\xi)^2$,
\end{tabular}
\end{equation}
and all the local components of the Cartan tensor $\mathcal C_Z$ are zero.
\end{proposition}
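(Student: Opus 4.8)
The plan is to specialize the general formulas \eqref{eq:scalar} and \eqref{eq:cartan} of Proposition~\ref{prop:7ssu} to the reference vector $W=Z$, using the hypothesis $X_0=\xi Z$. The crucial structural feature is that here $X_0$ is \emph{parallel} to the reference vector, $X_0=\xi W$, so every occurrence of $\scalarprod{X_0}{\cdot}$ may be replaced by $\xi\scalarprod{W}{\cdot}$, and $\scalarprod{X_0}{W}=\xi$ since $W$ is normalized and $\scalarprod{Z}{Z}=1$.

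For the osculating metric I would substitute $X_0=\xi Z$, $W=Z$ into \eqref{eq:scalar}. Apart from $\scalarprod{U}{V}$ and $\scalarprod{X_0}{W}\scalarprod{U}{V}=\xi\scalarprod{U}{V}$, every remaining term is a multiple of $\scalarprod{Z}{U}\scalarprod{Z}{V}$; collecting the coefficients ($\xi^2$ from the term quadratic in $X_0$, $-\xi$ from the term cubic in $W$, and $+\xi,+\xi$ from the two cross terms) gives $\xi^2+\xi$, so
\[
\scalarprod{U}{V}_Z=(1+\xi)\bigl(\scalarprod{U}{V}+\xi\,\scalarprod{Z}{U}\scalarprod{Z}{V}\bigr).
\]
Evaluating this on all pairs from the orthonormal basis $(X,Y,Z)$ yields table \eqref{eq:8wgdfv}: the extra summand contributes only to $\scalarprod{Z}{Z}_Z$, turning $1+\xi$ into $(1+\xi)(1+\xi)=(1+\xi)^2$, while elsewhere one simply reads off $1+\xi$ on the diagonal of $\mathcal Z^\perp$ and $0$ off it.

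For the Cartan tensor I would substitute the same data into \eqref{eq:cartan} and inspect a single summand of the cyclic sum before summing. Writing $\scalarprod{X_0}{\cdot}=\xi\scalarprod{W}{\cdot}$ throughout, the first term $\scalarprod{X_0}{W}\scalarprod{W}{U}\scalarprod{W}{V}\scalarprod{W}{X}$ and the third term $-\scalarprod{X_0}{X}\scalarprod{W}{V}\scalarprod{W}{U}$ become $+\xi\scalarprod{W}{U}\scalarprod{W}{V}\scalarprod{W}{X}$ and $-\xi\scalarprod{W}{U}\scalarprod{W}{V}\scalarprod{W}{X}$ and cancel; the second term $-\scalarprod{X_0}{W}\scalarprod{X}{V}\scalarprod{U}{W}$ and the fourth term $\scalarprod{X_0}{U}\scalarprod{X}{V}$ become $-\xi\scalarprod{X}{V}\scalarprod{W}{U}$ and $+\xi\scalarprod{W}{U}\scalarprod{X}{V}$ and cancel by symmetry of $\scalarprod{}{}$. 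Hence the summand, and therefore the whole cyclic sum, vanishes identically, so $\mathcal C_Z\equiv 0$ and all its components are $0$.

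There is really no obstacle here: both assertions collapse to bookkeeping once one observes that $X_0\parallel W$ makes \eqref{eq:cartan} cancel termwise. The single point to watch is not to drop the sign of the $-\scalarprod{X_0}{W}\scalarprod{W}{U}\scalarprod{W}{V}$ contribution when assembling the coefficient $\xi^2+\xi$ in the first computation; with that handled, \eqref{eq:8wgdfv} and $\mathcal C_Z=0$ both fall out, which is exactly the announced ``Riemannian flavour'': seen from the centre, the osculating metric is a rescaling of $\scalarprod{}{}$ keeping $\mathcal Z$ and $\mathcal Z^\perp$ orthogonal, and the non-Riemannian Cartan tensor disappears.
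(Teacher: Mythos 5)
Your computation is correct and is exactly the intended argument: the paper states this proposition without proof, leaving it as the direct substitution of $W=Z$, $X_0=\xi Z$ into Proposition~\ref{prop:7ssu}, which is what you carry out. Both the coefficient count giving $\scalarprod{U}{V}_Z=(1+\xi)\bigl(\scalarprod{U}{V}+\xi\scalarprod{Z}{U}\scalarprod{Z}{V}\bigr)$ and the termwise cancellation inside each summand of the cyclic sum in \eqref{eq:cartan} check out.
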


The next Proposition specializes Proposition 8 of \cite{MR2430243} for the Randers metric.
\begin{proposition}\label{prop2542}
The local components of the Chern--Rund connection $\nabla^Z$ w.r.t.\ base $(X,Y,Z)$ are
\begin{align*}
&\nabla^Z_{X}X=0,\quad
\nabla^Z_{X}Y=\frac{1}{2}Z,\quad
\nabla^Z_{Y}X=-\frac{1}{2}Z,\quad
\nabla^Z_YY=0,\\
&\nabla^Z_{Z}X=\nabla^Z_{X}Z=-\frac{\xi+1}{2}Y,\quad
\nabla^Z_{Z}Y=\nabla^Z_{Y}Z=\frac{\xi+1}{2}X,\\
&\nabla^Z_{Z}Z=0.
\end{align*} 
\end{proposition}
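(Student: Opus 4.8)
The plan is to exploit the vanishing of the Cartan tensor recorded in the preceding proposition. Since every local component of $\mathcal C_Z$ is zero---which \eqref{eq:cartan} forces because here $X_0=\xi Z$ is parallel to the reference vector $W=Z$---all three Cartan-tensor terms $\cartanb{\nabla^Z_XZ}{Y}{T}{Z}$, $\cartanb{\nabla^Z_YZ}{T}{X}{Z}$, $\cartanb{\nabla^Z_TZ}{X}{Y}{Z}$ on the right-hand side of \eqref{eq:CR} disappear. Working with left-invariant vector fields kills the first three (directional-derivative) terms of \eqref{eq:CR} as well, so that formula collapses to the Koszul identity
\[
2\scalarprod{\nabla^Z_UV}{T}_Z=\scalarprod{[U,V]}{T}_Z-\scalarprod{[V,T]}{U}_Z+\scalarprod{[T,U]}{V}_Z,
\]
i.e.\ $\nabla^Z$ is exactly the Levi-Civita connection of the left-invariant Riemannian metric $\scalarprod{}{}_Z$ whose components are listed in \eqref{eq:8wgdfv}.

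From here I would follow Algorithm \ref{algorithm}, which in this Cartan-free situation reduces to evaluating the displayed identity on triples drawn from the basis $(X,Y,Z)$ and reconstructing each vector $\nabla^Z_UV$ from the three numbers $\scalarprod{\nabla^Z_UV}{X}_Z$, $\scalarprod{\nabla^Z_UV}{Y}_Z$, $\scalarprod{\nabla^Z_UV}{Z}_Z$. Because $(X,Y,Z)$ is already $\scalarprod{}{}_Z$-orthogonal by \eqref{eq:8wgdfv}, this reconstruction is immediate: divide the $X$- and $Y$-inner products by $\xi+1$ and the $Z$-inner product by $(\xi+1)^2$. The only nonvanishing bracket is $[X,Y]=Z=-[Y,X]$, and $Z$ is $\scalarprod{}{}_Z$-orthogonal to both $X$ and $Y$, so nearly all of these evaluations are zero. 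For instance $2\scalarprod{\nabla^Z_XY}{T}_Z$ is nonzero only for $T=Z$, where it equals $\scalarprod{Z}{Z}_Z=(\xi+1)^2$, giving $\nabla^Z_XY=\tfrac12 Z$; likewise $2\scalarprod{\nabla^Z_ZX}{T}_Z$ is nonzero only for $T=Y$, yielding $\nabla^Z_ZX=-\tfrac{\xi+1}{2}Y$, and symmetrically $\nabla^Z_ZY=\tfrac{\xi+1}{2}X$. The diagonal terms $\nabla^Z_XX$, $\nabla^Z_YY$ and $\nabla^Z_ZZ$ all vanish, since every bracket that enters is either zero ($Z$ being central) or $\scalarprod{}{}_Z$-orthogonal to the relevant test vector.

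The three entries left over follow for free from torsion-freeness \eqref{eq:chern01}: $\nabla^Z_YX=\nabla^Z_XY-[X,Y]=\tfrac12 Z-Z=-\tfrac12 Z$, while $\nabla^Z_XZ=\nabla^Z_ZX$ and $\nabla^Z_YZ=\nabla^Z_ZY$ because $X$ and $Y$ commute with the central element $Z$. I do not anticipate a real obstacle here; the argument is mechanical once the Cartan terms are seen to drop out. The single point that needs care is the passage from the list of inner products $\scalarprod{\nabla^Z_UV}{\cdot}_Z$ to the vector $\nabla^Z_UV$ itself, because $\scalarprod{}{}_Z$ rescales the $X,Y$-plane and the $Z$-axis by the \emph{different} factors $\xi+1$ and $(\xi+1)^2$; this asymmetry is precisely what produces the factor $\xi+1$ distinguishing $\nabla^Z$ from the Levi-Civita connection of the Riemannian Heisenberg metric obtained at $\xi=0$.
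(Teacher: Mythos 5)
Your proof is correct: the vanishing of all components of $\mathcal C_Z$ reduces \eqref{eq:CR} to the Koszul formula for the left-invariant metric $\scalarprod{}{}_Z$, and your evaluations against the orthogonal (not orthonormal) basis $(X,Y,Z)$, with the correct rescaling by $\xi+1$ versus $(\xi+1)^2$, reproduce every stated component. The paper itself gives no proof here---it only cites Proposition 8 of \cite{MR2430243}---so your argument supplies exactly the computation that specialization amounts to, and I see no gap in it.
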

 
From here  we suppose that $W\notin\mathcal{Z}$ and the second base vector is given by
\[
E_2=\frac{1}{\sqrt{\scalarprod{C_W}{C_W}_W}}C_W.
\]
$E_3$ completes $(E_1,E_2)$ such that $(E_1,E_2,E_3)$ is orthonormal w.r.t.\ $\scalarprod{}{}_W$. (Later we fix the orientation of the triplet.)

\begin{lemma}\label{lem:s722}
If the Randers-type Minkowski functional on the three-dimensional Heisenberg algebra is determined by $X_0=\xi Z\in\mathcal Z$, then 
$[C_W,W]=0$.
\end{lemma}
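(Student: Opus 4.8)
The plan is to compute $C_W$ explicitly in terms of the Riemannian structure and then observe that $[C_W,W]=0$ follows from the fact that $C_W$ lies in a two-dimensional subspace spanned by $Z$ and (a component of) $W$. The starting point is formula \eqref{eq:85hf8}, which already tells us that for every $S\in\mathcal N$,
\[
\scalarprod{C_W}{S}_W=\tfrac{3}{2}(g^{22}+g^{33})\bigl(\scalarprod{X_0}{S}-\scalarprod{X_0}{W}\scalarprod{S}{W}\bigr),
\]
with $X_0=\xi Z$. So $\scalarprod{C_W}{S}_W$ is, up to the positive scalar $\lambda:=\tfrac{3}{2}(g^{22}+g^{33})$, the $\scalarprod{}{}_W$-linear functional $S\mapsto\xi\bigl(\scalarprod{Z}{S}-\scalarprod{Z}{W}\scalarprod{W}{S}\bigr)$. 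The first step is therefore to identify which vector $V$ satisfies $\scalarprod{V}{S}_W=\xi\scalarprod{Z}{S}-\xi\scalarprod{Z}{W}\scalarprod{W}{S}$ for all $S$; that vector is exactly $C_W/\lambda$.

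The key computational step is to invert the relation between $\scalarprod{}{}_W$ and $\scalarprod{}{}$ given by \eqref{eq:scalar}, specialized to $X_0=\xi Z$. Writing $W=aZ+W'$ with $W'\in\mathcal Z^\perp$ and $a=\scalarprod{Z}{W}$, and using $\scalarprod{X_0}{S}=\xi\scalarprod{Z}{S}$, one checks that \eqref{eq:scalar} expresses $\scalarprod{U}{V}_W$ as $\scalarprod{U}{V}$ plus terms each of which involves only $\scalarprod{Z}{\cdot}$ and $\scalarprod{W}{\cdot}$ of $U$ and $V$. Consequently the associated linear map (the "$g_{ij}$ matrix") acts as the identity on the orthogonal complement of $\operatorname{span}(Z,W)=\operatorname{span}(Z,W')$, and this subspace is $\scalarprod{}{}_W$-orthogonal to any vector built from $Z$ and $W$. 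The upshot I expect is $C_W=\mu(Z-\scalarprod{Z}{W}W)$ for some positive scalar $\mu$ — i.e.\ $C_W$ is the $\scalarprod{}{}_W$-orthogonal projection of $\xi Z$ onto $W^\perp$, rescaled — which in particular lies in $\operatorname{span}(Z,W)$.

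Once $C_W\in\operatorname{span}(Z,W)$ is established, the conclusion is immediate from the Heisenberg bracket relations: since $Z$ is central, $[Z,W]=0$, and trivially $[W,W]=0$, so $[C_W,W]=0$ by bilinearity of the bracket. The main obstacle is the second step — carrying out the inversion of \eqref{eq:scalar} cleanly enough to see that $C_W$ genuinely has no component outside $\operatorname{span}(Z,W)$; a slick way to avoid a full $3\times3$ inversion is to argue abstractly that if $S\perp\operatorname{span}(Z,W)$ with respect to $\scalarprod{}{}$, then all the correction terms in \eqref{eq:scalar} vanish when paired with such $S$, whence $\scalarprod{C_W}{S}_W=\lambda\xi(\scalarprod{Z}{S}-\scalarprod{Z}{W}\scalarprod{W}{S})=0$; since $\scalarprod{}{}_W$ is nondegenerate this forces $C_W$ into the $\scalarprod{}{}_W$-orthogonal complement of that one-dimensional space, which (again because the correction terms vanish there) coincides with $\operatorname{span}(Z,W)$. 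That observation reduces the whole lemma to the centrality of $Z$.
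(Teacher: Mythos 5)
Your proof is correct, and it rests on the same two pillars as the paper's: first, that $C_W\in\operatorname{span}(W,Z)$, and second, that $Z$ is central so the bracket then vanishes by bilinearity. The difference lies in how the first claim is obtained. The paper argues in the dual direction: it shows that any $S$ with $\scalarprod{S}{C}_W=\scalarprod{S}{W}_W=0$ also satisfies $\scalarprod{S}{X_0}_W=0$, by solving the small linear system coming from \eqref{eq:85hf8} and \eqref{eq:4gbfo} (using $1+\scalarprod{X_0}{W}\neq0$ twice); this places $X_0$, hence $Z$, in $\operatorname{span}(C,W)$, and a dimension count then puts $C$ in $\operatorname{span}(W,Z)$. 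You instead take $S$ Euclidean-orthogonal to $\operatorname{span}(Z,W)$, note that \eqref{eq:85hf8} gives $\scalarprod{C_W}{S}_W=0$ outright, and observe that all correction terms in \eqref{eq:scalar} die on such $S$, so the $\scalarprod{}{}_W$-orthocomplement of $\{Z,W\}^{\perp}$ is again $\operatorname{span}(Z,W)$; this avoids the linear system entirely and is slightly more economical, at the cost of invoking a double-orthocomplement argument rather than elementary eliminations. One caveat: the explicit formula you say you ``expect,'' namely $C_W=\mu\left(Z-\scalarprod{Z}{W}W\right)$, is not correct as stated --- that vector is the $\scalarprod{}{}$-orthogonal projection of $Z$ onto the $\scalarprod{}{}$-complement of $W$, whereas $C_W$ is $\scalarprod{}{}_W$-orthogonal to $W$, and inside $\operatorname{span}(Z,W)$ the two metrics are not proportional, so these are different lines in general. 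Since your final argument never uses that formula, the proof stands, but the remark should be deleted or corrected.
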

\begin{proof}
It is enough to see that if a vector $S$ satisfies $\scalarprod{S}{C_W}_W=0$ and $\scalarprod{S}{W}_W=0$ then $\scalarprod{S}{X_0}_W=0$, i.e.\
$C_W\in\operatorname{span}(W,Z)$.

We prove that if $\scalarprod{S}{C_W}_W=0$ and $\scalarprod{S}{W}_W=0$ then $\scalarprod{S}{X_0}=0$. 
By \eqref{eq:scalar}
\begin{equation}\label{eq:4gbfo}
\scalarprod{S}{W}_W=
(1+\scalarprod{X_0}{W})(\scalarprod{S}{W}+\scalarprod{X_0}{S}).
\end{equation}
Using the Cauchy-Schwarz inequality
\begin{equation}\label{cs}
\scalarprod{X_0}{W}^2\leq\scalarprod{X_0}{X_0}\scalarprod{W}{W}=\xi^2<1,
\end{equation}
from which it follows that $1+\scalarprod{X_0}{W}\neq 0$.

By \eqref{eq:85hf8} and \eqref{eq:4gbfo} conditions $\scalarprod{S}{C_W}_W=\scalarprod{S}{W}_W=0$ imply that
\begin{align}
\scalarprod{X_0}{S}-\scalarprod{X_0}{W}\scalarprod{S}{W}&=0\\
\scalarprod{X_0}{S}+\scalarprod{S}{W}&=0,\label{eq:76f7}
\end{align}
from which it follows
\[
\scalarprod{S}{W}(1+\scalarprod{X_0}{W})=0.
\]
Again, by \eqref{cs} we have 
\begin{equation}\label{eq:i6fg6}
\scalarprod{S}{W}=0,
\end{equation}
and \eqref{eq:76f7} implies that 
\begin{equation}\label{eq:8whdjh}
\scalarprod{X_0}{S}=0.
\end{equation}
Substituting \eqref{eq:i6fg6} and \eqref{eq:8whdjh} into \eqref{eq:scalar} we have
$\scalarprod{S}{X_0}_W=0$.
\end{proof}

The proof of the following statement has already been shown previously, but we formulate the result separately for future reference. 
\begin{corollary}\label{cor:7f65dgb}
If a vector $S$ satisfies $\scalarprod{S}{C_W}_W=0$ and $\scalarprod{S}{W}_W=0$ then $\scalarprod{S}{X_0}_W=0$ and $\scalarprod{S}{X_0}=\scalarprod{S}{W}=0$. In particular,
\begin{equation}\label{rq:e39hx}
\scalarprod{E_3}{X_0}_W=0 \text{ and } 
\scalarprod{E_3}{X_0}=\scalarprod{E_3}{W}=0.
\end{equation} 
\end{corollary}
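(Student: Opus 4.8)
The plan is to observe that the general statement is nothing but a restatement of the chain of implications already carried out inside the proof of Lemma \ref{lem:s722}, so essentially no new work is required. First I would recall that in that proof we started precisely from the hypotheses $\scalarprod{S}{C}_W=0$ and $\scalarprod{S}{W}_W=0$ and, using formula \eqref{eq:85hf8} for the trace of the Cartan operator together with the expression \eqref{eq:4gbfo} for $\scalarprod{S}{W}_W$, we derived successively $\scalarprod{S}{W}=0$ (equation \eqref{eq:i6fg6}), $\scalarprod{X_0}{S}=0$ (equation \eqref{eq:8whdjh}), and finally, substituting these two back into \eqref{eq:scalar}, $\scalarprod{S}{X_0}_W=0$. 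This is exactly the first assertion of the corollary; the only step worth double-checking is that the Cauchy--Schwarz estimate \eqref{cs}, which guarantees $1+\scalarprod{X_0}{W}\neq0$, remains available, and it does, since it depends only on $\|X_0\|=\xi<1$.

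Next I would establish the statement about $E_3$ by verifying that $S=E_3$ satisfies these hypotheses. By the construction of the Berwald--Mo\'or frame, $E_1$ is a positive multiple of $W$ and $E_2$ is a positive multiple of the Cartan vector $C=C_W$ (which is nonzero because $W\notin\mathcal Z$, by the preceding proposition), while $E_3$ is chosen orthogonal to both $E_1$ and $E_2$ with respect to $\scalarprod{}{}_W$. Hence $\scalarprod{E_3}{W}_W=0$ and $\scalarprod{E_3}{C}_W=0$, and applying the general statement with $S=E_3$ yields \eqref{rq:e39hx}.

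I do not expect any genuine obstacle here: the whole content has already been extracted in the proof of Lemma \ref{lem:s722}, and the corollary merely repackages it and then applies it to the distinguished vector $E_3$. If anything, the single point that deserves explicit mention is the well-definedness of $E_2$ --- equivalently, $C\neq0$ --- which is exactly why the hypothesis $W\notin\mathcal Z$ is assumed throughout this portion of the paper.
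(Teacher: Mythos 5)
Your proposal is correct and matches the paper exactly: the paper itself states that ``the proof of the following statement has already been shown previously,'' i.e.\ it relies on the chain of implications \eqref{eq:4gbfo}--\eqref{eq:8whdjh} inside the proof of Lemma~\ref{lem:s722}, followed by substitution into \eqref{eq:scalar}, just as you describe. Your application to $S=E_3$ via the $\scalarprod{}{}_W$-orthogonality of the Berwald--Mo\'or frame, together with the remark that $C\neq0$ requires $W\notin\mathcal Z$, is also the intended reading.
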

\subsection{The case of $W\notin\mathcal Z$}
To compute the Cartan tensor, we require a simple technical lemma.
\begin{lemma}If the Randers-type Minkowski functional on the three-dimensional Heisenberg algebra is determined by $X_0=\xi Z\in\mathcal Z$, then
for the Berwald--Mo\'or frame we have
\begin{gather}
\scalarprod{W}{E_2}+\scalarprod{X_0}{E_2}=0\label{eq:8sf6s}\\
\scalarprod{E_3}{E_2}=0\label{eq:762f7s}\\
\left(
1+\scalarprod{X_0}{W}
\right)
\left(
\scalarprod{E_2}{E_2}-\scalarprod{X_0}{E_2}^2\right)
=1\label{eq:og4xh}\\
\scalarprod{E_3}{E_3}(1+\scalarprod{X_0}{W})=1.
\label{eq:8v45vb}
\end{gather}
\end{lemma}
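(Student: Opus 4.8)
The plan is to read all four identities off the orthonormality relations defining the Berwald--Mo\'or frame with respect to $\scalarprod{}{}_W$, after translating them into the background metric $\scalarprod{}{}$ via \eqref{eq:scalar} and using the structural facts about $C$ and $E_3$ already proved above.

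First I would establish \eqref{eq:8sf6s}. Since $E_2$ is a positive scalar multiple of the Cartan vector $C$, and $C$ is $\scalarprod{}{}_W$-orthogonal to $W$ (noted right after the definition of the Cartan vector), we have $\scalarprod{C}{W}_W=0$. Substituting $S=C$ into \eqref{eq:4gbfo} and cancelling the factor $1+\scalarprod{X_0}{W}$, which is nonzero by \eqref{cs}, gives $\scalarprod{C}{W}+\scalarprod{X_0}{C}=0$; dividing by $\sqrt{\scalarprod{C}{C}_W}$ yields \eqref{eq:8sf6s}.

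Next, for \eqref{eq:762f7s} I would invoke Corollary \ref{cor:7f65dgb}: since $E_3$ is $\scalarprod{}{}_W$-orthogonal to both $E_1$ (hence to $W$) and $E_2$ (hence to $C$), the corollary gives $\scalarprod{E_3}{X_0}=\scalarprod{E_3}{W}=0$. Because $X_0=\xi Z$ with $\xi\neq0$, the first equality forces $\scalarprod{E_3}{Z}=0$, and since $C\in\operatorname{span}(W,Z)$ (the inclusion established inside the proof of Lemma \ref{lem:s722}) we conclude $\scalarprod{E_3}{C}=0$, whence $\scalarprod{E_3}{E_2}=0$.

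Finally I would obtain the two norm identities by expanding $\scalarprod{E_3}{E_3}_W=1$ and $\scalarprod{E_2}{E_2}_W=1$ through \eqref{eq:scalar}. For $E_3$ the relations $\scalarprod{X_0}{E_3}=\scalarprod{W}{E_3}=0$ annihilate every term on the right except $\scalarprod{E_3}{E_3}+\scalarprod{X_0}{W}\scalarprod{E_3}{E_3}$, giving \eqref{eq:8v45vb}. For $E_2$, substituting $\scalarprod{W}{E_2}=-\scalarprod{X_0}{E_2}$ from \eqref{eq:8sf6s} collapses the expansion to $1=(1+\scalarprod{X_0}{W})(\scalarprod{E_2}{E_2}-\scalarprod{X_0}{E_2}^2)$, which is \eqref{eq:og4xh}. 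I expect the only genuinely delicate step to be \eqref{eq:762f7s}: one has to keep the $\scalarprod{}{}$-orthogonality and the $\scalarprod{}{}_W$-orthogonality carefully apart and exploit $\xi\neq0$ to pass from $\scalarprod{E_3}{X_0}=0$ to $\scalarprod{E_3}{Z}=0$ before applying $C\in\operatorname{span}(W,Z)$; the other three identities are direct algebraic substitutions into \eqref{eq:scalar}.
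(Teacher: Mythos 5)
Your proof is correct and follows essentially the same route as the paper: expand the $\scalarprod{}{}_W$-orthonormality relations of the Berwald--Mo\'or frame through \eqref{eq:scalar}, cancel the nonzero factor $1+\scalarprod{X_0}{W}$ using \eqref{cs}, and invoke Corollary~\ref{cor:7f65dgb} for the identities involving $E_3$. The only (harmless) deviation is in \eqref{eq:762f7s}, where the paper expands $\scalarprod{E_3}{E_2}_W=0$ directly to obtain $(1+\scalarprod{X_0}{W})\scalarprod{E_3}{E_2}=0$, whereas you pass from $\scalarprod{E_3}{X_0}=0$ and $\xi\neq 0$ to $\scalarprod{E_3}{Z}=0$ and then use $C\in\operatorname{span}(W,Z)$; both arguments are valid and rest on the same ingredients.
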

\begin{proof}
All statements follow directly from  Proposition~\ref{prop:7ssu}. In more detail, using \eqref{eq:scalar} we have
\begin{align*}
0=\scalarprod{W}{E_2}_W&
=\scalarprod{W}{E_2}+\scalarprod{X_0}{W}\scalarprod{X_0}{E_2}
-\scalarprod{X_0}{W}\scalarprod{W}{W}\scalarprod{W}{E_2}\\
&\quad+\scalarprod{X_0}{W}\scalarprod{W}{E_2}+
\scalarprod{X_0}{W}\scalarprod{W}{E_2}
+\scalarprod{X_0}{E_2}\scalarprod{W}{W}\\
&=(1+\scalarprod{X_0}{W})
(\scalarprod{W}{E_2}+\scalarprod{X_0}{E_2})
\end{align*}
by the fact that $\scalarprod{W}{W}=1$.
 $1+\scalarprod{X_0}{W}\neq 0$ by \eqref{cs}, which gives \eqref{eq:8sf6s}.

Similarly,
\begin{align*}
0=\scalarprod{E_3}{E_2}_W&=\scalarprod{E_3}{E_2}+
\scalarprod{X_0}{E_3}\scalarprod{X_0}{E_2}
-\scalarprod{X_0}{W}\scalarprod{W}{E_3}\scalarprod{W}{E_2}\\
&\quad+\scalarprod{X_0}{E_3}\scalarprod{W}{E_2}
+\scalarprod{X_0}{W}\scalarprod{E_3}{E_2}+
\scalarprod{X_0}{E_2}\scalarprod{W}{E_3},\\
\intertext{but since $\scalarprod{X_0}{E_3}=0$, and $\scalarprod{W}{E_3}=0$ 
(cf.~Corollary~\ref{cor:7f65dgb})}
&=(1+\scalarprod{X_0}{W})\scalarprod{E_3}{E_2}.
\end{align*}
which, since $1+\scalarprod{X_0}{W}\neq 0$, yields \eqref{eq:762f7s}.

We next prove \eqref{eq:og4xh}. 
\begin{align*}
1=\scalarprod{E_2}{E_2}_W
&=\scalarprod{E_2}{E_2}+\scalarprod{X_0}{E_2}^2
-\scalarprod{X_0}{W}\scalarprod{W}{E_2}^2\\
&\quad +\scalarprod{X_0}{E_2}\scalarprod{W}{E_2}+
\scalarprod{X_0}{W}\scalarprod{E_2}{E_2}+
\scalarprod{X_0}{E_2}\scalarprod{W}{E_2},\\
\intertext{because of $\scalarprod{X_0}{E_2}=-\scalarprod{W}{E_2}$}
&=\scalarprod{E_2}{E_2}-\scalarprod{X_0}{E_2}^2
-\scalarprod{X_0}{W}\scalarprod{X_0}{E_2}^2
+\scalarprod{X_0}{W}\scalarprod{E_2}{E_2}\\
&=(1+\scalarprod{X_0}{W})
\left(\scalarprod{E_2}{E_2}-\scalarprod{X_0}{E_2}^2\right)
\end{align*}

Finally, we prove \eqref{eq:8v45vb}.
\begin{align*}
1=\scalarprod{E_3}{E_3}_W&
=\scalarprod{E_3}{E_3}+\scalarprod{X_0}{E_3}^2
-\scalarprod{X_0}{W}\scalarprod{W}{E_3}^2\\
&\quad+\scalarprod{X_0}{E_3}\scalarprod{W}{E_3}+
\scalarprod{X_0}{W}\scalarprod{E_3}{E_3}
+\scalarprod{X_0}{E_3}\scalarprod{W}{E_3},\\
\intertext{but since $\scalarprod{X_0}{E_3}=0$, (cf.~\eqref{rq:e39hx})}
&=\scalarprod{E_3}{E_3}\left(1+\scalarprod{X_0}{W}\right).\qedhere
\end{align*}
\end{proof}
\begin{corollary}\label{cor:76rvg}
With the notations and hypotheses above,
\begin{equation}\label{eq:9fedhc}
\scalarprod{Z}{E_2}^2=\frac{\xi^2-(w-1)^2}{\xi^2w^3},\
\scalarprod{E_2}{E_2}=\frac{\xi^2+2w-1}{w^3},
\end{equation}
where $w=\|W\|_W=1+\scalarprod{X_0}{W}$. 
Moreover, $\scalarprod{Z}{E_2}>0$.
\end{corollary}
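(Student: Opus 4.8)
The plan is to write down $E_2$ explicitly in the background orthonormal basis and then read off the two numbers. Throughout put $z=\scalarprod{W}{Z}$, so that $\scalarprod{X_0}{W}=\xi z$ and $w=1+\xi z$; hence $\xi z=w-1$, $\xi^2z^2=(w-1)^2$, and $z=(w-1)/\xi$. Since $W\notin\mathcal Z$, Lemma~\ref{lem:s722} ($[C,W]=0$) forces $C\in\operatorname{span}(W,Z)$, and therefore $E_2\in\operatorname{span}(W,Z)$; write $E_2=pW+qZ$. (Equivalently, $E_2\perp E_3$ by \eqref{eq:762f7s} while $E_3\perp W$ and $E_3\perp Z$ by Corollary~\ref{cor:7f65dgb}, and $W,Z$ are independent.)

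First I would eliminate $p$ in favour of $q$. Relation \eqref{eq:8sf6s}, i.e.\ $\scalarprod{W}{E_2}+\scalarprod{X_0}{E_2}=0$, reads $pw+q(z+\xi)=0$, so $p=-q(z+\xi)/w$. Substituting this and using $\scalarprod{W}{W}=\scalarprod{Z}{Z}=1$ and $w-\xi z=1$, a short computation gives
\[
\scalarprod{Z}{E_2}=pz+q=\frac{q(1-z^2)}{w},\qquad
\scalarprod{E_2}{E_2}=p^2+2pqz+q^2=\frac{q^2}{w^2}\bigl((z+\xi)^2-2wz(z+\xi)+w^2\bigr),
\]
and expanding the bracket in the second formula and substituting $\xi z=w-1$ collapses it to $(1-z^2)(\xi^2+2w-1)$. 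Next I would insert these into the normalization \eqref{eq:og4xh}, which says $w\bigl(\scalarprod{E_2}{E_2}-\xi^2\scalarprod{Z}{E_2}^2\bigr)=1$: the bracket becomes $\tfrac{q^2(1-z^2)}{w^2}\bigl(\xi^2+2w-1-\xi^2(1-z^2)\bigr)$, and since $2w-1+\xi^2z^2=2w-1+(w-1)^2=w^2$ this reduces to $w\,q^2(1-z^2)=1$, i.e.\ $q^2=1/\bigl(w(1-z^2)\bigr)$. Plugging this value of $q^2$ back into the two displayed formulas yields $\scalarprod{Z}{E_2}^2=(1-z^2)/w^3$ and $\scalarprod{E_2}{E_2}=(\xi^2+2w-1)/w^3$; substituting $1-z^2=\bigl(\xi^2-(w-1)^2\bigr)/\xi^2$ into the first gives the asserted expression, and the strict Cauchy--Schwarz inequality (valid because $W\notin\mathcal Z$, so $W\neq\pm Z$) guarantees $1-z^2>0$, hence positivity of both quantities.

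It remains only to fix the sign of $\scalarprod{Z}{E_2}$ itself, which is the one genuinely non-formal step. Since $E_2=C/\|C\|_W$ with $\|C\|_W>0$, the sign of $\scalarprod{Z}{E_2}_W$ equals that of $\scalarprod{C}{Z}_W$, and by \eqref{eq:ufdbdi} the latter equals $\tfrac32(g^{22}+g^{33})\xi(1-z^2)>0$. On the other hand, applying \eqref{eq:scalar} to $U=Z$, $V=E_2$ and inserting $\scalarprod{W}{E_2}=-\xi\scalarprod{Z}{E_2}$ from \eqref{eq:8sf6s} together with $\scalarprod{X_0}{W}=w-1$ collapses (using $1+2(w-1)+(w-1)^2=w^2$) to $\scalarprod{Z}{E_2}_W=w^2\scalarprod{Z}{E_2}$; as $w^2>0$, this forces $\scalarprod{Z}{E_2}>0$. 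The main obstacle in carrying all this out is just algebraic housekeeping: keeping $\scalarprod{}{}_W$ and $\scalarprod{}{}$ carefully apart and performing the substitutions $\xi z=w-1$ and $\xi^2z^2=(w-1)^2$ at the right moments; once $E_2\in\operatorname{span}(W,Z)$ is established, the values of $\scalarprod{Z}{E_2}$ and $\scalarprod{E_2}{E_2}$ are completely forced.
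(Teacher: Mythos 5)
Your proof is correct, but it takes a genuinely different route from the paper's. The paper never writes $E_2$ in coordinates: it substitutes $S=Z$ and $S=C_W$ into the trace formula \eqref{eq:85hf8}, forms the ratio $\scalarprod{C_W}{Z}_W\scalarprod{X_0}{C_W}/\scalarprod{C_W}{C_W}_W$ so that the unknown factor $\tfrac32(g^{22}+g^{33})$ cancels, converts $\scalarprod{Z}{E_2}_W$ into $w^2\scalarprod{Z}{E_2}$ via \eqref{eq:scalar}, and thereby lands directly on $\scalarprod{X_0}{E_2}^2=\xi^2(1-\scalarprod{Z}{W}^2)/w^3$; the value of $\scalarprod{E_2}{E_2}$ then falls out of \eqref{eq:og4xh} exactly as in your argument. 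You instead exploit the structural fact $C\in\operatorname{span}(W,Z)$ (which is indeed what the proof of Lemma~\ref{lem:s722} establishes, and which your alternative justification via \eqref{eq:762f7s} and Corollary~\ref{cor:7f65dgb} also delivers), write $E_2=pW+qZ$, eliminate $p$ with \eqref{eq:8sf6s}, and normalize $q$ with \eqref{eq:og4xh}. Your computation is more elementary and self-contained --- it stays entirely inside the background inner product and makes transparent why the two quantities are ``completely forced'' --- whereas the paper's ratio trick avoids introducing coordinates but leans on the auxiliary formula \eqref{eq:85hf8}. The sign determination is essentially identical in both: positivity of $\scalarprod{C_W}{Z}_W$ from \eqref{eq:ufdbdi} combined with $\scalarprod{Z}{E_2}_W=w^2\scalarprod{Z}{E_2}$. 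I verified your algebra (in particular the collapse of $(z+\xi)^2-2wz(z+\xi)+w^2$ to $(1-z^2)(\xi^2+2w-1)$ and the identity $2w-1+(w-1)^2=w^2$); it checks out.
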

\begin{proof}
From \eqref{eq:scalar} we get $\scalarprod{W}{W}_W^2=(1+\scalarprod{X_0}{W})^2$. Since \eqref{eq:8v45vb}, $1+\scalarprod{X_0}{W}>0$, and we proved that $w=1+\scalarprod{X_0}{W}$.

Substituting $Z$ and $C_W$ for $S$ in \eqref{eq:85hf8} we get
\begin{align*}
\scalarprod{C_W}{Z}_W&=\left(g^{22}+g^{33}\right)\left(
\scalarprod{X_0}{Z}-\scalarprod{X_0}{W}\scalarprod{Z}{W}
\right)\\
&=\left(g^{22}+g^{33}\right)\xi\left(1-\scalarprod{Z}{W}^2\right)
\end{align*}
and
\begin{align*}
\scalarprod{C_W}{C_W}_W&=\left(g^{22}+g^{33}\right)\left(
\scalarprod{X_0}{C_W}-\scalarprod{X_0}{W}\scalarprod{C_W}{W}
\right)\\
&=\left(g^{22}+g^{33}\right)\scalarprod{X_0}{C_W}(1+\scalarprod{X_0}{W}),
\quad\text{by \eqref{rq:e39hx}}.
\end{align*}
It follows that
\[
\frac{\scalarprod{C_W}{Z}_W\scalarprod{X_0}{C_W}}{\scalarprod{C_W}{C_W}_W}=
\frac{\xi\left(1-\scalarprod{Z}{W}^2\right)}{w};
\]
i.e.
\[
\scalarprod{E_2}{Z}_W
\scalarprod{E_2}{X_0}=
\frac{\xi\left(1-\scalarprod{Z}{W}^2\right)}{w}.
\]
Applying \eqref{eq:scalar} again, we obtain
\[
\scalarprod{Z}{E_2}_W
=w^2\scalarprod{Z}{E_2}=\frac{w^2}{\xi}\scalarprod{X_0}{E_2}.
\]
Thus
\[
\scalarprod{E_2}{X_0}^2
=\frac{\xi^2(1-\scalarprod{Z}{W}^2)}{w^3}=
\frac{\xi^2-(w-1)^2}{w^3}.
\]
The second statement is a straightforward consequence of this result and \eqref{eq:og4xh}. By \eqref{eq:ufdbdi}, $\xi$ and $\scalarprod{Z}{E_2}_W=w^2\scalarprod{Z}{E_2}$ have the same sign. 
\end{proof}
To proceed further, we need to know the local components of the Cartan tensor. 
\begin{proposition}
\begin{align}
\cartanb{E_2}{E_2}{E_2}{W}&=\frac{3}{2}\scalarprod{X_0}{E_2},\label{eq:car01}\\
\cartanb{E_2}{E_2}{E_3}{W}&=0,\label{eq:7fw56}\\
\cartanb{E_3}{E_3}{E_3}{W}&=0,\label{eq:7v3aszuj}\\
\cartanb{E_3}{E_3}{E_2}{W}&=\frac{1}{2}\scalarprod{X_0}{E_2}.\label{eq:7gzv}
\end{align}
\end{proposition}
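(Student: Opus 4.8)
The plan is to evaluate each of the four components straight from the cyclic-sum formula~\eqref{eq:cartan} of Proposition~\ref{prop:7ssu}, inserting the frame vectors and simplifying with the orthogonality data already in hand. The inputs I would use are $\scalarprod{X_0}{E_3}=\scalarprod{W}{E_3}=0$ (Corollary~\ref{cor:7f65dgb}), $\scalarprod{E_2}{E_3}=0$ and $\scalarprod{W}{E_2}=-\scalarprod{X_0}{E_2}$ (equations~\eqref{eq:762f7s} and~\eqref{eq:8sf6s}), together with the two normalisations $\bigl(1+\scalarprod{X_0}{W}\bigr)\bigl(\scalarprod{E_2}{E_2}-\scalarprod{X_0}{E_2}^{2}\bigr)=1$ and $\bigl(1+\scalarprod{X_0}{W}\bigr)\scalarprod{E_3}{E_3}=1$ (equations~\eqref{eq:og4xh} and~\eqref{eq:8v45vb}); recall $1+\scalarprod{X_0}{W}\neq0$ by~\eqref{cs}. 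To keep the bookkeeping light I would abbreviate $a=\scalarprod{X_0}{E_2}$ and $b=\scalarprod{X_0}{W}$.

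For~\eqref{eq:car01} I set $U=V=X=E_2$, so the cyclic sum is three copies of a single bracket; substituting $\scalarprod{W}{E_2}=-a$ collapses that bracket to $-ba^{3}+ab\scalarprod{E_2}{E_2}-a^{3}+a\scalarprod{E_2}{E_2}=a(1+b)\bigl(\scalarprod{E_2}{E_2}-a^{2}\bigr)$, which equals $a$ by~\eqref{eq:og4xh}. Tripling and multiplying by $\tfrac12$ gives $\tfrac32\scalarprod{X_0}{E_2}$.

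For the other three components I would run through the three cyclic permutations of the argument triple and check which summands of~\eqref{eq:cartan} can survive. The first term $\scalarprod{X_0}{W}\scalarprod{W}{U}\scalarprod{W}{V}\scalarprod{W}{X}$ carries a factor $\scalarprod{W}{\cdot}$ on every slot, and the third term $-\scalarprod{X_0}{X}\scalarprod{W}{V}\scalarprod{W}{U}$ a factor $\scalarprod{W}{\cdot}$ or $\scalarprod{X_0}{\cdot}$ on every slot, so both vanish the moment $E_3$ occurs; the second and fourth terms, $-\scalarprod{X_0}{W}\scalarprod{X}{V}\scalarprod{U}{W}$ and $\scalarprod{X_0}{U}\scalarprod{X}{V}$, are nonzero only when $U=E_2$ and $V=X$ (since $\scalarprod{E_2}{E_3}=0$), hence in the presence of an $E_3$ only the arrangement $(U,V,X)=(E_2,E_3,E_3)$ contributes. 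That arrangement does not occur among the cyclic permutations of $(E_2,E_2,E_3)$ or $(E_3,E_3,E_3)$, which proves~\eqref{eq:7fw56} and~\eqref{eq:7v3aszuj}; it occurs once among those of $(E_3,E_3,E_2)$, where the two surviving terms give $-b\scalarprod{E_3}{E_3}(-a)+a\scalarprod{E_3}{E_3}=a(1+b)\scalarprod{E_3}{E_3}=a$ by~\eqref{eq:8v45vb}, so after the factor $\tfrac12$ we obtain~\eqref{eq:7gzv}.

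I do not expect a genuine obstacle: once the orthogonality relations are available the computation is mechanical. The only point needing care is the sign bookkeeping inside the bracket of~\eqref{eq:cartan} and the correct tracking of which slot each argument occupies under the cyclic permutation — confusing, say, $\scalarprod{X}{V}$ with $\scalarprod{U}{V}$ would spoil the coefficients — so I would write the three permutations out explicitly in at least case~\eqref{eq:7gzv} as a consistency check.
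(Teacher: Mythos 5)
Your proof is correct and follows essentially the same route as the paper: both expand the cyclic-sum formula \eqref{eq:cartan} on the frame vectors and reduce via \eqref{eq:8sf6s}, \eqref{eq:762f7s}, \eqref{eq:og4xh}, \eqref{eq:8v45vb} and Corollary~\ref{cor:7f65dgb}. Your term-by-term survival analysis for the cases involving $E_3$ is a slightly tidier organization of the same computation the paper carries out by writing the general expressions for $2\mathcal{C}_W(E_2,E_2,U)$ and $2\mathcal{C}_W(E_3,E_3,U)$ and substituting.
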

\begin{proof}
Equation \eqref{eq:cartan} implies that for arbitrary $U$
\begin{align}\label{eq:snxl}
2\cartanb{E_2}{E_2}{U}{W}&=
3\scalarprod{X_0}{W}\scalarprod{W}{E_2}^2\scalarprod{W}{U}\\
&\quad-2\scalarprod{X_0}{W}\scalarprod{W}{E_2}\scalarprod{E_2}{U}
-\scalarprod{X_0}{W}\scalarprod{W}{U}\scalarprod{E_2}{E_2}\nonumber\\
&\quad-\scalarprod{X_0}{U}\scalarprod{W}{E_2}^2
-2\scalarprod{X_0}{E_2}\scalarprod{W}{U}\scalarprod{W}{E_2}\nonumber\\
&\quad+2\scalarprod{X_0}{E_2}\scalarprod{E_2}{U}
+\scalarprod{X_0}{U}\scalarprod{E_2}{E_2}.\nonumber
\end{align}
Let $U=E_2$. \eqref{eq:8sf6s} gives
\begin{align*}
2\cartanb{E_2}{E_2}{E_2}{W}&=
3\scalarprod{X_0}{W}\scalarprod{W}{E_2}^3
-3\scalarprod{X_0}{W}\scalarprod{W}{E_2}\scalarprod{E_2}{E_2}\\
&\quad-3\scalarprod{X_0}{E_2}\scalarprod{W}{E_2}^2
+3\scalarprod{X_0}{E_2}\scalarprod{E_2}{E_2}\\
&=
3\scalarprod{X_0}{W}\scalarprod{W}{E_2}^3
-3\scalarprod{X_0}{W}\scalarprod{W}{E_2}\scalarprod{E_2}{E_2}\\
&\quad-3\scalarprod{X_0}{E_2}^3
+3\scalarprod{X_0}{E_2}\scalarprod{E_2}{E_2}\\
&=3(1+\scalarprod{X_0}{W})\scalarprod{X_0}{E_2}
(\scalarprod{E_2}{E_2}-\scalarprod{X_0}{E_2}^2)\\
&=3\scalarprod{X_0}{E_2},\quad \text{by \eqref{eq:og4xh}}.
\end{align*}
Thus, \eqref{eq:car01} holds.

Similarly, let $U=E_3$ in \eqref{eq:snxl}.
\begin{align*}
2\cartanb{E_2}{E_2}{E_3}{W}&=
3\scalarprod{X_0}{W}\scalarprod{W}{E_2}^2\scalarprod{W}{E_3}\\
&\quad-2\scalarprod{X_0}{W}\scalarprod{W}{E_2}\scalarprod{E_2}{E_3}
-\scalarprod{X_0}{W}\scalarprod{W}{E_3}\scalarprod{E_2}{E_2}\\
&\quad-\scalarprod{X_0}{E_3}\scalarprod{W}{E_2}^2
-2\scalarprod{X_0}{E_2}\scalarprod{W}{E_3}\scalarprod{W}{E_2}\\
&\quad 2\scalarprod{X_0}{E_2}\scalarprod{E_2}{E_3}+
\scalarprod{X_0}{E_3}\scalarprod{E_2}{E_2}\\
&= 2\scalarprod{X_0}{W}\scalarprod{X_0}{E_2}\scalarprod{E_2}{E_3}+
2\scalarprod{X_0}{E_2}\scalarprod{E_2}{E_3},
\quad\text{by \eqref{eq:8sf6s}}\\
&=0,\quad\text{by \eqref{eq:762f7s}},
\end{align*}
which provides \eqref{eq:7fw56}.

Again, from \eqref{eq:cartan} we have
\begin{align}\label{eq:kf5d}
2\cartanb{E_3}{E_3}{U}{W}&=
3\scalarprod{X_0}{W}\scalarprod{W}{E_3}^2\scalarprod{W}{U}\\
&\quad-\scalarprod{X_0}{W}\scalarprod{E_3}{U}\scalarprod{E_3}{W}-
\scalarprod{X_0}{W}\scalarprod{E_3}{E_3}\scalarprod{U}{W}\nonumber\\
&\quad-\scalarprod{X_0}{U}\scalarprod{W}{E_3}^2
-2\scalarprod{X_0}{E_3}\scalarprod{W}{U}\scalarprod{W}{E_3}\nonumber\\
&\quad+\scalarprod{X_0}{U}\scalarprod{E_3}{E_3}
+2\scalarprod{X_0}{E_3}\scalarprod{U}{E_3}.\nonumber
\end{align}
Set $U=E_3$. 
\begin{align*}
2\cartanb{E_3}{E_3}{U}{W}&=
3\scalarprod{X_0}{W}\scalarprod{W}{E_3}^3
-3\scalarprod{X_0}{W}\scalarprod{E_3}{E_3}\scalarprod{E_3}{W}\\
&\quad-3\scalarprod{X_0}{E_3}\scalarprod{W}{E_3}^2
+3\scalarprod{X_0}{E_3}\scalarprod{E_3}{E_3}.
\end{align*}
By Corollary~\ref{cor:7f65dgb} we have \eqref{eq:7v3aszuj}.

Finally, substitute $U=E_2$ in \eqref{eq:kf5d}.
\begin{align*}
2\cartanb{E_3}{E_3}{E_2}{W}&
=3\scalarprod{X_0}{W}\scalarprod{W}{E_3}^2\scalarprod{W}{E_2}\\
&\quad -2\scalarprod{X_0}{W}\scalarprod{E_3}{E_2}\scalarprod{E_2}{W}
-\scalarprod{X_0}{W}\scalarprod{E_3}{E_3}\scalarprod{E_2}{W}\\
&\quad-\scalarprod{X_0}{E_2}\scalarprod{W}{E_3}^2
-2\scalarprod{X_0}{E_3}\scalarprod{W}{E_2}\scalarprod{W}{E_3}\\
&\quad +\scalarprod{X_0}{E_2}\scalarprod{E_3}{E_3}
+2\scalarprod{X_0}{E_3}\scalarprod{E_2}{E_3}\\
&=\scalarprod{E_3}{E_3}\scalarprod{X_0}{E_2}
\left(
1+\scalarprod{X_0}{W}
\right),\quad\text{by Corollary~\ref{cor:7f65dgb}}\\
&=\scalarprod{X_0}{E_2},\quad\text{by \eqref{eq:8v45vb}}.\qedhere
\end{align*}
\end{proof}

Now, we give an explicit formula for $E_3$.
Since we are in a three-dimensional setting, $E_3$ can be constructed as a cross product of $E_1$ and $ E_2$, where cross product is determined by the scalar product $\scalarprod{}{}_W$. However, $\scalarprod{E_3}{W}=\scalarprod{E_3}{E_2}=0$, thus $E_3$ should be parallel to $W\times E_2$ where the cross product $\times$ now refers to the scalar product $\scalarprod{}{}$. In fact, $E_3=\pm W\times E_2$, as we see from the following statement.
\begin{proposition}
$\|W\times E_2\|_W=1$ where $\times $ denotes the cross product w.r.t.\ the scalar product $\scalarprod{}{}$.
\end{proposition}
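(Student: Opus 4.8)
The plan is to carry out the computation in the underlying Euclidean space $(\mathcal N,\scalarprod{}{})$, where the cross product $\times$ genuinely lives, and only at the very end to pass to the $W$-metric by means of the conversion formula \eqref{eq:scalar}. Write $N:=W\times E_2$.

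The first step is to locate $N$. In the course of the proof of Lemma~\ref{lem:s722} it was shown that $C_W\in\operatorname{span}(W,Z)$, so $E_2\in\operatorname{span}(W,Z)$ as well; consequently $N=W\times E_2$ is $\scalarprod{}{}$-orthogonal to the whole plane $\operatorname{span}(W,Z)$. (Alternatively one may argue from \eqref{eq:762f7s} together with \eqref{rq:e39hx} that $\operatorname{span}(W,E_2)=\operatorname{span}(W,Z)$.) In particular $\scalarprod{N}{W}=0$, and since $X_0=\xi Z$ also $\scalarprod{N}{X_0}=0$.

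The second step is the conversion. Substituting $U=V=N$ into \eqref{eq:scalar} and using $\scalarprod{N}{W}=\scalarprod{N}{X_0}=0$, every mixed term drops out and one is left with
\[
\scalarprod{N}{N}_W=\bigl(1+\scalarprod{X_0}{W}\bigr)\scalarprod{N}{N}=w\,\scalarprod{N}{N},
\]
where $w=1+\scalarprod{X_0}{W}>0$ as in Corollary~\ref{cor:76rvg}. It therefore remains to check that $\scalarprod{N}{N}=1/w$. For this I would invoke Lagrange's identity for the Euclidean cross product, together with $\|W\|=1$ (recall $W$ is $\scalarprod{}{}$-normalized):
\[
\|W\times E_2\|^2=\|W\|^2\|E_2\|^2-\scalarprod{W}{E_2}^2=\scalarprod{E_2}{E_2}-\scalarprod{W}{E_2}^2 .
\]
By \eqref{eq:8sf6s}, $\scalarprod{W}{E_2}^2=\scalarprod{X_0}{E_2}^2$, and then \eqref{eq:og4xh} gives precisely $\scalarprod{E_2}{E_2}-\scalarprod{X_0}{E_2}^2=1/w$. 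Combining the two displays yields $\|W\times E_2\|_W^2=w\cdot(1/w)=1$.

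I do not expect a genuine obstacle here: once the right viewpoint is fixed the argument is essentially a three-line calculation. The one point that needs care — and which is really the crux — is the first step, namely that $N$ is $\scalarprod{}{}$-orthogonal not merely to $W$ but also to $X_0$, since that is exactly what makes \eqref{eq:scalar} collapse to the single factor $w$; this rests on $E_2\in\operatorname{span}(W,Z)$, i.e.\ ultimately on the special algebraic position $X_0=\xi Z$ through Lemma~\ref{lem:s722}. A minor bookkeeping remark is that $E_2$ is defined at all, which is guaranteed by the standing hypothesis $W\notin\mathcal Z$ together with the proposition asserting that then $C_W\neq0$.
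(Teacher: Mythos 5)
Your proof is correct and follows essentially the same route as the paper: Lagrange's identity together with \eqref{eq:8sf6s} and \eqref{eq:og4xh} to get $\|W\times E_2\|^2=1/w$, then the conversion \eqref{eq:scalar} to pass to $\scalarprod{}{}_W$. Your first step, justifying that $W\times E_2$ is $\scalarprod{}{}$-orthogonal to both $W$ and $X_0$ so that \eqref{eq:scalar} collapses to the single factor $1+\scalarprod{X_0}{W}$, is a detail the paper leaves implicit, and it is a worthwhile addition.
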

\begin{proof}
Substituting into \eqref{eq:scalar} we have
\[
\|W\times E_2\|_W^2=(1+\scalarprod{X_0}{W})\|W\times E_2\|^2.
\]
Now we calculate $\|W\times E_2\|$ separately. From \eqref{eq:8sf6s} and \eqref{eq:og4xh} we find that
\begin{align*}
\|W\times E_2\|^2&=\|W\|^2\cdot\|E_2\|^2-\scalarprod{W}{E_2}^2\\
&=\scalarprod{E_2}{E_2}-\scalarprod{X_0}{E_2}^2=\frac{1}{1+\scalarprod{X_0}{W}}.
\qedhere
\end{align*}
\end{proof}

We fix now the the direction of $E_3$ by $E_3=W\times E_2$.
\begin{lemma}\label{lemma:9cnv4n}
With the notations and hypotheses above,
\begin{align}
[E_3,E_1]&=\scalarprod{E_2}{Z}Z\label{eq:hvu7tvh}\\
[E_2,E_3]&=\left(
\scalarprod{E_2}{E_2}\scalarprod{W}{Z}+\xi\scalarprod{Z}{E_2}^2
\right)Z.\label{eq:hvu7tvh2}
\end{align}
\end{lemma}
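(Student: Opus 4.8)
The plan is to reduce both brackets to one clean identity for the Lie bracket on $\mathcal N$ combined with the vector triple-product rule, and then to eliminate everything using only \eqref{eq:8sf6s} and the hypothesis $X_0=\xi Z$.

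First I would record the elementary fact that for all $U,V\in\mathcal N$ one has $[U,V]=\scalarprod{U\times V}{Z}Z$, where $\times$ denotes the cross product determined by $\scalarprod{}{}$ for the orientation in which $(X,Y,Z)$ is positively oriented (equivalently, $X\times Y=Z$; this is precisely the orientation that makes the identity hold with the correct sign, and it is the one implicit in $E_3=W\times E_2$). This is immediate: writing $U=U^\perp+\scalarprod{U}{Z}Z$ with $U^\perp\in\mathcal Z^\perp$, and likewise for $V$, only the $\mathcal Z^\perp$-parts contribute to $[U,V]$ because $Z$ is central; on $\operatorname{span}(X,Y)$ one has $[aX+bY,cX+dY]=(ad-bc)Z$, and $ad-bc$ is exactly the $Z$-component of $(aX+bY)\times(cX+dY)$, which in turn equals $\scalarprod{U\times V}{Z}$ since dropping the $Z$-components of $U,V$ does not change the $Z$-component of the cross product.

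Next I substitute $E_1=\tfrac1w W$, where $w=\|W\|_W=1+\scalarprod{X_0}{W}$ as in Corollary~\ref{cor:76rvg}, and $E_3=W\times E_2$. For \eqref{eq:hvu7tvh} the triple-product identity gives $E_3\times E_1=\tfrac1w\,(W\times E_2)\times W=\tfrac1w\bigl(E_2-\scalarprod{W}{E_2}W\bigr)$, using $\|W\|=1$; taking the $Z$-component yields $[E_3,E_1]=\tfrac1w\bigl(\scalarprod{E_2}{Z}-\scalarprod{W}{E_2}\scalarprod{W}{Z}\bigr)Z$. Now \eqref{eq:8sf6s} together with $X_0=\xi Z$ gives $\scalarprod{W}{E_2}=-\scalarprod{X_0}{E_2}=-\xi\scalarprod{Z}{E_2}$, and also $w=1+\xi\scalarprod{W}{Z}$, so the bracket becomes $\tfrac1w\scalarprod{E_2}{Z}\bigl(1+\xi\scalarprod{W}{Z}\bigr)Z=\scalarprod{E_2}{Z}Z$. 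For \eqref{eq:hvu7tvh2} the same identity gives $E_2\times E_3=E_2\times(W\times E_2)=\scalarprod{E_2}{E_2}W-\scalarprod{E_2}{W}E_2$, whose $Z$-component is $\scalarprod{E_2}{E_2}\scalarprod{W}{Z}-\scalarprod{E_2}{W}\scalarprod{E_2}{Z}=\scalarprod{E_2}{E_2}\scalarprod{W}{Z}+\xi\scalarprod{Z}{E_2}^2$, again by \eqref{eq:8sf6s}, which is the asserted formula.

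The computation is short, and there is no serious obstacle: one does not even need the explicit values of $\scalarprod{E_2}{E_2}$ and $\scalarprod{Z}{E_2}$ from Corollary~\ref{cor:76rvg}, only the linear relation \eqref{eq:8sf6s} and $X_0=\xi Z$. The one point that genuinely needs care is the bookkeeping of orientations — keeping the cross product used to define $E_3$ consistent with $X\times Y=Z$ — since that is exactly what fixes the sign in $[U,V]=\scalarprod{U\times V}{Z}Z$ and hence in both stated identities.
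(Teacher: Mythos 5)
Your proof is correct and follows essentially the same route as the paper: both use the identity $[U,V]=\scalarprod{U\times V}{Z}Z$ together with the vector triple-product expansion and the relation \eqref{eq:8sf6s}. The only differences are that you additionally justify the bracket--cross-product identity and write out the second bracket explicitly, which the paper leaves as ``a similar computation.''
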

\begin{proof}
In view of the relation $[U,V]=\scalarprod{U\times V}{Z}Z$, we obtain from the triple product identity that
\begin{align*}
[E_3,E_1]&=\scalarprod{(W\times E_2)\times E_1}{Z}Z=
(\scalarprod{W}{E_1}\scalarprod{E_2}{Z}-\scalarprod{E_2}{E_1}\scalarprod{W}{Z})Z\\
&=\frac{1}{w}\scalarprod{E_2}{Z}(1+\scalarprod{W}{X_0})Z=\scalarprod{E_2}{Z}Z.
\end{align*}
A similar computation shows \eqref{eq:hvu7tvh2}.
\end{proof}
\begin{proposition}\label{prop:7xf3g}
If the Randers-type Minkowski functional on the three-dimensional Heisenberg algebra is determined by $X_0=\xi Z\in\mathcal Z$ ($0<\xi<1$), the reference vector $W\notin \mathcal{Z}$ and $\|W\|=1$ then 
\begin{equation}\label{eq:if33}
\nabla^W_{E_1}W=f_1E_3,\quad
\nabla^W_{E_2}W=f_2E_3,\quad
\nabla^W_{E_3}W=f_3E_2
\end{equation} 
where $f_1$, $f_2$, $f_3$ are functions of $w=\|W\|_W$. Explicitly, we have
\begin{align*}
f_1&=\scalarprod{[E_3,E_1]}{W}_W,\\
f_2&=\frac{1}{2}\big(
\scalarprod{[E_3,W]}{E_2}_W+\scalarprod{[E_3,E_2]}{W}_W
-\scalarprod{[E_3,W]}{W}_W\scalarprod{X_0}{E_2}
\big),\\
f_3&=\frac{1}{2}\big(
\scalarprod{[E_3,W]}{E_2}_W+\scalarprod{[E_2,E_3]}{W}_W
-\scalarprod{[E_3,W]}{W}_W\scalarprod{X_0}{E_2}
\big).
\end{align*}
\end{proposition}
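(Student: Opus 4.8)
The plan is to run the \emph{Local strategy} (Algorithm~\ref{algorithm}), steps 1a and 1b, relative to the Berwald--Moór frame $(E_1,E_2,E_3)$. Since all vector fields involved are left invariant, the first three terms of the abbreviation $\mathcal A_W$ vanish, so $\mathcal A_W(U,V,S)=\scalarprod{[U,V]}{S}_W-\scalarprod{[V,S]}{U}_W+\scalarprod{[S,U]}{V}_W$; moreover $w=\|W\|_W$ is a positive constant and $W=wE_1$.

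Step~1a produces $\nabla^W_WW$. By Lemma~\ref{lem:s722} we have $[W,E_1]=0$ and $[W,E_2]=0$, so $\mathcal A_W(W,W,E_1)=\mathcal A_W(W,W,E_2)=0$, while $\mathcal A_W(W,W,E_3)=2\scalarprod{[E_3,W]}{W}_W$. Hence $\nabla^W_WW=\scalarprod{[E_3,W]}{W}_W\,E_3$, and dividing by $w$ (using $\nabla^W_{wE_1}W=w\nabla^W_{E_1}W$ and $[E_3,W]=w[E_3,E_1]$) gives $\nabla^W_{E_1}W=\scalarprod{[E_3,E_1]}{W}_W\,E_3=f_1E_3$; note also $\nabla^W_WW=wf_1E_3$ and $wf_1=\scalarprod{[E_3,W]}{W}_W$.

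For $S\in\{E_2,E_3\}$ I would then apply \eqref{1b} in the form $2\scalarprod{\nabla^W_SW}{E_i}_W=\mathcal A_W(S,W,E_i)-2wf_1\,\cartanb{E_3}{E_i}{S}{W}$. By \eqref{eq:chern00} every Cartan term containing $E_1$ vanishes, and among the rest only $\cartanb{E_3}{E_3}{E_2}{W}=\cartanb{E_3}{E_2}{E_3}{W}=\tfrac12\scalarprod{X_0}{E_2}$ survive, by \eqref{eq:7fw56}, \eqref{eq:7v3aszuj}, \eqref{eq:7gzv} and the full symmetry of the Cartan tensor. Combining this with $[W,E_1]=[W,E_2]=0$, $[E_1,E_2]=0$ and $wf_1=\scalarprod{[E_3,W]}{W}_W$, a routine bracket bookkeeping yields $\scalarprod{\nabla^W_{E_2}W}{E_1}_W=\scalarprod{\nabla^W_{E_2}W}{E_2}_W=0$, $\scalarprod{\nabla^W_{E_2}W}{E_3}_W=f_2$, and likewise $\scalarprod{\nabla^W_{E_3}W}{E_1}_W=\scalarprod{\nabla^W_{E_3}W}{E_3}_W=0$, $\scalarprod{\nabla^W_{E_3}W}{E_2}_W=f_3$, which is precisely \eqref{eq:if33}. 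The one slightly delicate point is the vanishing of the $E_3$-component of $\nabla^W_{E_3}W$: it needs $\scalarprod{[E_3,W]}{E_3}_W=0$, which holds because $[E_3,W]$ is a multiple of $Z$ by Lemma~\ref{lemma:9cnv4n}, while $\scalarprod{Z}{E_3}_W=0$ — indeed $\xi\scalarprod{Z}{E_3}=\scalarprod{X_0}{E_3}=0$ by Corollary~\ref{cor:7f65dgb}, and then \eqref{eq:scalar} collapses to $\scalarprod{Z}{E_3}_W=(1+\scalarprod{X_0}{W})\scalarprod{Z}{E_3}=0$.

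Finally, to confirm that $f_1,f_2,f_3$ are functions of $w$ alone, expand the brackets via Lemma~\ref{lemma:9cnv4n} and the inner products via \eqref{eq:scalar}, and substitute $\scalarprod{Z}{E_2}$, $\scalarprod{E_2}{E_2}$ and $\scalarprod{X_0}{E_2}$ through Corollary~\ref{cor:76rvg} together with $\scalarprod{X_0}{W}=w-1$. This last reduction, as well as the bracket bookkeeping above, is entirely mechanical; I expect no genuine obstacle beyond keeping the signs of the antisymmetric brackets straight and checking that the non-$E_3$ (respectively non-$E_2$) components of $\nabla^W_{E_2}W$ (respectively $\nabla^W_{E_3}W$) truly vanish, so that \eqref{eq:if33} holds in the stated reduced form.
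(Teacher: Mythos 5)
Your proposal is correct and follows essentially the same route as the paper: the `local strategy' (step 1a for $\nabla^W_WW$, then step 1b for $S=E_2,E_3$), using $[W,E_2]=0$ from Lemma~\ref{lem:s722}, the vanishing of Cartan terms containing the $W$-direction, and \eqref{eq:7fw56}--\eqref{eq:7gzv} to isolate the surviving components; your justification of $\scalarprod{[E_3,W]}{E_3}_W=0$ is sound and fills in a step the paper leaves implicit. The only difference is that the paper actually carries out the final ``mechanical'' reduction, obtaining $f_1=\frac{1}{\xi}\sqrt{\frac{\xi^2-(w-1)^2}{w}}\left(\xi+\frac{w-1}{\xi}\right)$, $f_3=\frac{w}{2}$ and $f_2=f_3+\scalarprod{[E_3,E_2]}{W}_W$ explicitly, whereas you only indicate how it would go.
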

\begin{proof}
We follow the `local strategy'.
From \eqref{eq:CR} we determine the coordinates of $\nabla^W_WW$ w.r.t.\ the Berwald--Mo\'or frame. Since
\[
2\scalarprod{\nabla_W^WW}{E_i}_W=-\scalarprod{[W,E_i]}{W}_W+
\scalarprod{[E_i,W]}{W}_W=
-2\scalarprod{[W,E_i]}{W}_W,
\]
it follows that
\begin{gather*}
\scalarprod{\nabla^W_WW}{E_1}_W=0,\quad
\scalarprod{\nabla^W_WW}{E_2}_W=0,\quad
\scalarprod{\nabla^W_WW}{E_3}_W=\scalarprod{[E_3,W]}{W}_W,
\end{gather*}
which formulae lead to $\nabla^W_WW=\scalarprod{[E_3,W]}{W}_WE_3$, thus 
\[
\nabla^W_{E_1}{E_1}=\scalarprod{[E_3,E_1]}{E_1}_WE_3\text{ and }
\nabla^W_{E_1}{W}=\scalarprod{[E_3,E_1]}{W}_WE_3=f_1E_3.
\]

Similarly, \eqref{eq:CR} yields
\begin{multline*}
2\scalarprod{\nabla^W_{E_2}W}{U}_W=
-\scalarprod{[W,U]}{E_2}_W+\scalarprod{[U,E_2]}{W}_W\\
-2\scalarprod{[E_3,W]}{W}_W\cartanb{E_3}{U}{E_2}{W}.
\end{multline*}
Thus
\[
\scalarprod{\nabla^W_{E_2}W}{E_1}_W=0,\ \scalarprod{\nabla^W_{E_2}W}{E_2}_W=0
\]
and from \eqref{eq:7gzv}
\begin{multline*}
2\scalarprod{\nabla^W_{E_2}W}{E_3}_W=
-\scalarprod{[W,E_3]}{E_2}_W+\scalarprod{[E_3,E_2]}{W}_W\\
-\scalarprod{[E_3,W]}{W}_W\scalarprod{X_0}{E_2}=2f_2,
\end{multline*}
and this implies that $\nabla^W_{E_2}W=f_2E_3$.

Finally,
\begin{multline*}
2\scalarprod{\nabla^W_{E_3}W}{U}_W=
\scalarprod{[E_3,W]}{U}_W-\scalarprod{[W,U]}{E_3}_W+\scalarprod{[U,E_3]}{W}_W\\
-2\scalarprod{[E_3,W]}{W}_W\cartanb{E_3}{U}{E_3}{W},
\end{multline*}
which yields to
\[
\scalarprod{\nabla^W_{E_3}W}{E_1}_W=0,\
\scalarprod{\nabla^W_{E_3}W}{E_3}_W=0
\]
and
\[
\scalarprod{\nabla^W_{E_3}}{E_2}_W=
\scalarprod{[E_3,W]}{E_2}_W+\scalarprod{[E_2,E_3]}{W}_W-
\scalarprod{[E_3,W]}{W}_W\scalarprod{X_0}{E_2}=2f_3,
\]
which gives the last statement of \eqref{eq:if33}.

We show that $f_1$ depends only on $w$. Combining \eqref{eq:hvu7tvh} with \eqref{eq:scalar} we obtain
\[
f_1=\scalarprod{E_2}{Z}\scalarprod{Z}{W}_W
=w(\xi+\scalarprod{Z}{W})\scalarprod{E_2}{Z}.
\]
In Corollary~\ref{cor:76rvg} we computed $\scalarprod{E_2}{Z}$ directly from $w$, moreover we have \[\scalarprod{Z}{W}=\frac{w-1}{\xi}.\] 
Thus
\[
f_1=\sqrt{\frac{\xi^2-(w-1)^2}{\xi^2w^3}}w\left(\xi+\frac{w-1}{\xi}\right)=
\frac{1}{\xi}\sqrt{\frac{\xi^2-(w-1)^2}{w}}\left(\xi+\frac{w-1}{\xi}\right).
\]

Argument similar to that of the previous statement shows that 
\[
\scalarprod{[E_3,W]}{E_2}_W=w^3\scalarprod{Z}{E_2}^2,
\] 
and
\[
\scalarprod{[E_2,E_3]}{W}_W
-\scalarprod{[E_3,W]}{W}_W\scalarprod{X_0}{E_2}=
\scalarprod{Z}{W}^2+\scalarprod{X_0}{W}.
\]
Combining these relations yields $f_3=\frac{1}{2}w$.

Finally, 
\[
f_2=f_3+\scalarprod{[E_3,E_2]}{W}_W=\frac{w}{2}-
\frac{\xi^2+w-1}{w}\left(
1+\frac{w-1}{\xi^2}
\right).\qedhere
\]
\end{proof}
\begin{proposition}\label{prop:7xf37cg}
If the Randers-type Minkowski functional on the three-dimensional Heisenberg algebra is determined by $X_0=\xi Z\in\mathcal Z$ ($0<\xi<1$), the reference vector $W\notin \mathcal{Z}$ and $\|W\|=1$ then 
the local components of the Chern--Rund connection $\nabla^W$ w.r.t.\ Berwald-Mo\'or frame are
\begin{align*}
&\nabla^W_{E_1}E_1=\frac{f_1}{w}E_3,\quad
\nabla^W_{E_1}E_2=\nabla^W_{E_2}E_1=\frac{f_2}{w}E_3,\quad
\nabla^W_{E_2}E_2=f_4E_3,\\
&\nabla^W_{E_3}E_1=\nabla^W_{E_1}{E_3}+[E_3,E_1]=\frac{f_3}{w}E_2,\\
&\nabla^W_{E_2}E_3=\nabla^W_{E_3}E_2+[E_2,E_3]=-\frac{f_2}{w}E_1+f_5E_2,\\
&\nabla^W_{E_3}E_3=-\frac{f_3}{2}\scalarprod{X_0}{E_2}E_3,
\end{align*} 
where $w=\|W\|_W$, $f_1$, $f_2$, $f_3$ are defined in Proposition~\ref{prop:7xf3g}, and
\begin{align*}
f_4&=-\frac{f_1}{w}-f_2\scalarprod{X_0}{E_2}+\frac{3w}{4}\scalarprod{X_0}{E_2}\\
f_5&=\scalarprod{[E_2,E_3]}{E_2}_W-\frac{3}{2}f_3\scalarprod{X_0}{E_2}.
\end{align*}
\end{proposition}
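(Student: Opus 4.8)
The plan is to follow the `Local strategy' Algorithm~\ref{algorithm}, using the Berwald--Mo\'or frame $(E_1,E_2,E_3)$ and all the preparatory data assembled above: the scalar product relations \eqref{eq:8sf6s}--\eqref{eq:8v45vb}, the Cartan tensor values \eqref{eq:car01}--\eqref{eq:7gzv}, the bracket formulas \eqref{eq:hvu7tvh}--\eqref{eq:hvu7tvh2}, and crucially Proposition~\ref{prop:7xf3g}, which already tells us $\nabla^W_{E_i}W$ explicitly. Since step~1b of the algorithm is done (that is precisely Proposition~\ref{prop:7xf3g}), what remains is step~1c: for each pair $S,T\in\{E_1,E_2,E_3\}$ compute $\scalarprod{\nabla^W_ST}{E_i}_W$ from \eqref{1c}, where now every term on the right --- the $\mathcal A_W$ part built from brackets, and the three Cartan correction terms, each involving some $\nabla^W_{E_j}W=f_jE_k$ --- is known.

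First I would handle $\nabla^W_{E_1}E_1$: since $E_1=\tfrac1{\sqrt{w}}W$ we get $\nabla^W_{E_1}E_1=\tfrac1w\nabla^W_WW=\tfrac{f_1}{w}E_3$ directly, and likewise $\nabla^W_{E_1}E_2=\nabla^W_{E_2}E_1=\tfrac1{\sqrt w}\nabla^W_{E_2}W=\tfrac{f_2}{w}E_3$ (using torsion-freeness \eqref{eq:chern01} together with $[E_1,E_2]=\tfrac1{\sqrt w}[W,E_2]=0$, since $E_2\parallel C$ and $[C,W]=0$ by Lemma~\ref{lem:s722}), and $\nabla^W_{E_3}E_1-\nabla^W_{E_1}E_3=[E_3,E_1]$ reduces $\nabla^W_{E_3}E_1$ to $\tfrac1{\sqrt w}\nabla^W_{E_3}W+[E_3,E_1]=\tfrac{f_3}{w}E_2+\scalarprod{E_2}{Z}Z$; one then checks $\scalarprod{E_2}{Z}Z$ has no $E_1$ or $E_3$ component in the $\scalarprod{}{}_W$-decomposition and contributes only to rewrite things back in frame terms, giving the stated $\tfrac{f_3}{w}E_2$. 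Next I would compute $\nabla^W_{E_2}E_2$ via \eqref{1c} with $S=T=E_2$: the bracket part vanishes because $[E_2,\cdot]$ only produces multiples of $Z$ and $[E_2,E_2]=0$, so only the Cartan terms survive, namely $-2\cdot2\,\mathcal C_W(\nabla^W_{E_2}W,E_2,E_i)+2\,\mathcal C_W(\nabla^W_{E_i}W,E_2,E_2)$; plugging $\nabla^W_{E_2}W=f_2E_3$ and $\nabla^W_{E_i}W$ and using \eqref{eq:car01}, \eqref{eq:7fw56}, \eqref{eq:7gzv} isolates the $E_3$-coordinate and yields $f_4$ after substituting the value of $\mathcal C_W(E_2,E_2,E_2)=\tfrac32\scalarprod{X_0}{E_2}$ and $\mathcal C_W(E_3,E_3,E_2)=\tfrac12\scalarprod{X_0}{E_2}$. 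Finally $\nabla^W_{E_3}E_2$ (hence $\nabla^W_{E_2}E_3$ by torsion) and $\nabla^W_{E_3}E_3$ come out the same way, extracting each $E_i$-component from \eqref{1c} and recognizing the brackets $\scalarprod{[E_2,E_3]}{\cdot}_W$, $\scalarprod{[E_3,E_2]}{\cdot}_W$ via \eqref{eq:hvu7tvh2} and \eqref{eq:scalar}.

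The main obstacle will be bookkeeping rather than conceptual: every application of \eqref{1c} spawns three Cartan correction terms, each of which is a trilinear expression in vectors of the form $f_jE_k$, and one must repeatedly use the vanishing relations \eqref{eq:chern00}, \eqref{eq:7fw56}, \eqref{eq:7v3aszuj} together with $\mathcal C_W(E_1,\cdot,\cdot)=0$ to kill most of them, while being careful that the surviving ones are expressed through $\scalarprod{X_0}{E_2}$ and $w$ consistently. A secondary subtlety is that the brackets $[E_2,E_3]$ and $[E_3,E_1]$ are multiples of $Z\notin\{E_1,E_2,E_3\}$, so each such contribution must be re-expanded in the orthonormal frame via $Z=\sum_i\scalarprod{Z}{E_i}_WE_i$ --- here Corollary~\ref{cor:76rvg} and \eqref{rq:e39hx} (which gives $\scalarprod{Z}{E_3}=\scalarprod{Z}{E_3}_W=0$ since $\scalarprod{E_3}{Z}$ relates to $\scalarprod{E_3}{X_0}=0$) guarantee $Z$ lies in $\operatorname{span}(E_1,E_2)$, keeping the formulas closed. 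Once all six products $\scalarprod{\nabla^W_ST}{E_i}_W$ are tabulated, assembling them as in step~2 of the algorithm produces exactly the displayed components, and a final consistency check against the almost-metric identity and torsion-freeness confirms the result.
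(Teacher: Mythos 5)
Your overall plan is the right one and matches the paper's (whose proof of Proposition~\ref{prop:7xf37cg} is just the remark that it is ``similar to Proposition~\ref{prop:7xf3g}''): treat Proposition~\ref{prop:7xf3g} as step 1b of Algorithm~\ref{algorithm} and run step 1c, i.e.\ formula \eqref{1c}, for each pair $S,T\in\{E_1,E_2,E_3\}$. However, one step of your sketch fails as written. For $\nabla^W_{E_2}E_2$ you claim that ``the bracket part vanishes because $[E_2,\cdot]$ only produces multiples of $Z$''. That is not a reason: $Z$ is not $\scalarprod{}{}_W$-orthogonal to $E_2$. Indeed $\scalarprod{Z}{E_2}_W=w^2\scalarprod{Z}{E_2}$ with $\scalarprod{Z}{E_2}>0$ by Corollary~\ref{cor:76rvg} (and you yourself note later that $Z\in\operatorname{span}(E_1,E_2)$, which contradicts the vanishing claim). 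Concretely, $\mathcal A_W(E_2,E_2,E_3)=-2\scalarprod{[E_2,E_3]}{E_2}_W\neq0$, and a short computation using \eqref{eq:hvu7tvh2}, \eqref{eq:scalar} and Corollary~\ref{cor:76rvg} gives $\scalarprod{[E_2,E_3]}{E_2}_W=(\xi+\scalarprod{Z}{W})\scalarprod{Z}{E_2}=f_1/w$; this surviving bracket term is precisely the source of the $-f_1/w$ summand in $f_4$. Your procedure, taken literally, would return $f_4$ without that term. (For $\nabla^W_{E_3}E_3$ the brackets genuinely do drop out, but for the different reason that $\scalarprod{Z}{E_3}_W=0$ by Corollary~\ref{cor:7f65dgb}.)

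Two smaller slips: since $\scalarprod{W}{W}_W=(1+\scalarprod{X_0}{W})^2=w^2$, the normalization is $E_1=W/w$, not $W/\sqrt{w}$; your intermediate factors $1/\sqrt{w}$ are inconsistent with the final coefficients $f_i/w$ that you quote (which are the correct ones). And $\nabla^W_{E_3}E_1$ is obtained directly as $\tfrac1w\nabla^W_{E_3}W=\tfrac{f_3}{w}E_2$; one does not add $[E_3,E_1]$ to it --- torsion-freeness is only needed afterwards to recover $\nabla^W_{E_1}E_3=\nabla^W_{E_3}E_1-[E_3,E_1]$, and $[E_3,E_1]=\scalarprod{E_2}{Z}Z$ certainly has nonzero $E_1$- and $E_2$-components, so it cannot be discarded as you suggest. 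With these corrections the bookkeeping goes through and reproduces the stated components.
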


\begin{proof}
The proof is similar to that given in the Proposition~\ref{prop:7xf3g}.
\end{proof}
\section{The flag curvature of left invariant Randers metrics on 3-dimensional Heisenberg group}
The geometry of any Lie group with left invariant Riemannian metric reflects strongly the algebraic structure of the corresponding Lie algebra. Papers e.g.~by J.~Wolf, J.~Milnor 
(\cite{MR0425012,MR0162206}) serve many evidence for this statement. As an example we recall P.~Eberlein's result for 2-step nilpotent groups with left invariant Riemannian metric.
\begin{theoremm}[Eberlein, \cite{MR1296558}]
$\Pi=\operatorname{span}(X,Y)\subseteq \mathcal N$, 
where $(X,Y)$ is orthonormal pair.  Let $K(\Pi)=K(X,Y)$ is the sectional curvature map. Then
  \begin{align*}
  &K(X,Y)=-\frac{3}{4}\| [X,Y]\|^2, \quad X,Y\in\centrumort\\
  &K(X,Z)=\frac{1}{4}\|j(Z)X\|^2, \quad
    X\in\centrumort,Z\in\centrum\\
  &K(Z,Z^*)=0,\quad Z,Z^*\in\centrum
  \end{align*}
where $\centrum$ is the center of the Lie algebra, $\centrumort$ is the orthogonal complement of the center w.r.t.\ the left invariant Riemannian metric and $j(Z)\colon\centrumort\to\centrumort$ is defined by
$\scalarprod{j(Z)X}{Y}=\scalarprod{[X,Y]}{Z}$ for all $X,Y\in\centrumort$.
  \end{theoremm}
The purpose of this section is to generalise this result to left invariant Randers metrics on the 3-dimensional Heisenberg group. We note that S.~Deng and Z.~Hu have recently obtained some remarkable results for curvatures of homogeneous Randers metrics (\cite{MR3046307,MR3004458,MR2892934}).

The flag curvature of the Finsler manifold $N$ is determined by 
a basepoint $p\in N$, the flagpole $W\in T_pN$ and an edge (transverse vector) $U\in T_pN$
by the formula
\begin{equation}\label{eq:flag}
K(\Pi, W)=K(\Pi)=\frac{\scalarprod{R(U,W)W}{U}_W}%
{\scalarprod{U}{U}_W\scalarprod{W}{W}_W-\scalarprod{U}{W}_W^2}
\end{equation}
where $\Pi=\operatorname{span}(U,W)$ and $R$ is the affine curvature tensor of the Chern-Rund connection (see e.g.\ \cite[Section 3.9.]{MR1747675}).

\begin{theorem}
If the Randers-type Minkowski functional on the three-dimensional Heisenberg algebra  ($=\operatorname{span}(X,Y,Z)$, as in definition \ref{def:3wjswg}) is determined by $X_0=\xi Z\in\mathcal Z$ ($0<\xi<1$) and $W=Z$, then the flag curvature of the Chern-Rund connection is
\[
K(\Pi)=\frac{1}{4}\quad
\text{for all } U\in\Span(X,Y).
\]
\end{theorem}
\begin{proof}
Let $U=\alpha X+\beta Y$. From relations in Proposition \ref{prop2542} we get
\[
R(U,W)W=\frac{\alpha(\xi+1)^2}{4}X+\frac{\beta(\xi+1)^2}{4}Y,
\]
and an easy calculation gives the statement.
\end{proof}

In what follows $W\in\Span(X,Y)$ and we get special case of Proposition \ref{prop:7xf37cg}.
\begin{proposition}\label{prop:5r6}
(With the notations from Proposition \ref{prop:7xf37cg}.) If $W\in\Span(X,Y)$ then $\Span(X,Y)=\Span(E_1,E_3)$ and $E_2=-\xi E_1+Z$. Moreover,
$f_1=\xi$, $f_2=\frac{1}{2}-\xi^2$, $f_4=\xi^3-\frac{3\xi}{4}$, $f_5=\frac{\xi}{4}$, and the local components of the Chern-Rund connection w.r.t.\ Berwald-Mo\'or frame are
\begin{align*}
&\nabla^W_{E_1}E_1=\xi E_3,\quad
\nabla^W_{E_1}E_2=\nabla^W_{E_2}E_1=\left(\frac{1}{2}-\xi^2\right)E_3,\quad
\nabla^W_{E_2}E_2=\xi\left(\xi^2-\frac{3}{4}\right)E_3,\\
&\nabla^W_{E_3}E_1=\frac{1}{2}E_2, \
\nabla^W_{E_1}{E_3}=-\xi E_1-\frac{1}{2}E_2,\\
&\nabla^W_{E_2}E_3=\left(\xi^2-\frac{1}{2}\right)E_1+\frac{1}{4}\xi E_2, \
\nabla^W_{E_3}E_2=-\frac{1}{2}E_1-\frac{3}{4}\xi E_2,\\
&\nabla^W_{E_3}E_3=-\frac{1}{4}\xi E_3.
\end{align*} 
\end{proposition}
\begin{proof}
$E_3$ is always orthogonal to the centrum (in both senses, i.e.\ with respect to the euclidean scalar product $\scalarprod{}{}$ and osculating scalar product $\scalarprod{}{}_W$, see Corollary \ref{cor:7f65dgb}), which means that $E_3\in\Span(X,Y)$.

Substituting into \eqref{eq:scalar} we have
\begin{align}%
\scalarprod{E_1}{Z}_W&=\xi\label{eq:726w7}\\
\scalarprod{W}{W}_W&=1\\
\scalarprod{Z}{Z}_W&=1+\xi^2\label{eq:ztctr}.
\end{align}
It follows that $E_1=W$ and
\begin{align*}
\scalarprod{E_1}{-\xi E_1+Z}_W=0\\
\scalarprod{-\xi E_1+Z}{-\xi E_1+Z}_W=1\\
\scalarprod{E_3}{-\xi E_1+Z}_W=0.
\end{align*}
Thus $E_2=-\xi E_1+Z$ or $E_2=\xi E_1-Z$. From Corollary \ref{cor:76rvg} we know that 
$\operatorname{sgn}\scalarprod{Z}{E_2}=\operatorname{sgn} \xi>0$ and this fact implies $E_2=-\xi E_1+Z$.
Now, the statements are simple consequences of Proposition \ref{prop:7xf37cg}.
\end{proof}

\begin{theorem}
If the Randers-type Minkowski functional on the three-dimensional Heisenberg algebra  is determined by $X_0=\xi Z\in\mathcal Z$ ($0<\xi<1$) and
$W\in\Span(X,Y)$, then the the flag curvature of the Chern-Rund connection is
\begin{enumerate}[a)]
\item $K(\Pi)=\frac{\xi^2-3}{4}<0$ \label{ita}
for all $U\in\operatorname{span}(X,Y)$ 
\item $K(\Pi)=\frac{1-\xi^2}{4}>0$ for $U=Z$.\label{itb}
\end{enumerate}
\end{theorem}
\begin{proof}
Let $U=\alpha E_1+\beta E_3$. From Proposition~\ref{prop:5r6} we have
\[
R(U,W)W=\beta\frac{\xi^2-3}{4}E_3,
\]
so
\[
\scalarprod{R(U,W)W}{U}_W=
\beta \frac{\xi^2-3}{4}\scalarprod{E_3}{\alpha E_1+\beta E_3}_W=
\beta^2 \frac{\xi^2-3}{4}.
\]
Applying \eqref{eq:726w7}--\eqref{eq:ztctr}, the denominator of \eqref{eq:flag} is $\beta^2$ and statement \emph{\ref{ita})} holds.

Let $U=Z$. A simple substitution into Proposition~\ref{prop:5r6} gives
\[
R(Z,W)W=\frac{1-\xi^2}{4}E_2,
\]
and we have
\[
\scalarprod{R(Z,W)W}{Z}_W=\frac{1-\xi^2}{4}\scalarprod{E_2}{\xi W+E_2}_W=
\frac{1-\xi^2}{4}.
\]
Moreover, from \eqref{eq:726w7}--\eqref{eq:ztctr} we get
\[
\scalarprod{U}{U}_W\scalarprod{W}{W}_W-\scalarprod{U}{W}_W^2=1,
\]
then we obtain statement \emph{\ref{itb})}.
\end{proof}
\def\polhk#1{\setbox0=\hbox{#1}{\ooalign{\hidewidth
  \lower1.5ex\hbox{`}\hidewidth\crcr\unhbox0}}}

\end{document}